\newtheorem{theorem}{Theorem}[section]
\newtheorem{lemma}[theorem]{Lemma}
\theoremstyle{definition}
\newtheorem{definition}[theorem]{Definition}
\theoremstyle{remark}
\newtheorem{remark}[theorem]{Remark}
\theoremstyle{notation}
\numberwithin{equation}{section}
\theoremstyle{corollary}
\newtheorem{corollary}[theorem]{Corollary}
\newcommand{\Map}{\mathrm{Map}}
\newcommand{\map}{\mathrm{map}}
\newcommand{\Cat}{\mathsf{Cat}}
\newcommand{\Top}{\mathsf{Top}}
\newcommand{\Chk}{\mathsf{dgMod}_{k}}
\newcommand{\Chr}{\mathsf{dgMod}_{R}}
\newcommand{\Chs}{\mathsf{dgMod}_{S}}
\newcommand{\Chrs}{\mathsf{dgMod}_{R-S}}
\newcommand{\Chrr}{\mathsf{dgMod}_{R-R}}
\newcommand{\C}{\mathsf{C}}
\newcommand{\RR}{\mathsf{R}}
\newcommand{\SSS}{\mathsf{S}}
\newcommand{\HH}{\mathrm{HH}_{k}}
\newcommand{\Derive}{\mathrm{Der}}
\newcommand{\Ext}{\mathrm{Ext}}
\newcommand{\coh}{\mathrm{H}}
\newcommand{\Ho}{\mathrm{Ho}}
\newcommand{\Algk}{\mathsf{dgAlg}_{k}}
\newcommand{\Algr}{\mathsf{dgAlg}_{R}}
\newcommand{\Algrs}{\mathsf{dgAlg}_{R-S}}
\newcommand{\CAlgk}{\mathsf{dgCAlg}_{k}}
\begin{document}

\title[]{The mapping space of unbounded differential graded algebras}

\author[]{Ilias Amrani}
\address{Department of Mathematics, Masaryk University\\ Kotlarska 2\\ Brno, Czech Republic.}
\email{ilias.amranifedotov@gmail.com}
\email{amrani@math.muni.cz}
\thanks{Supported by the project CZ.1.07/2.3.00/20.0003
of the Operational Programme Education for Competitiveness of the Ministry
of Education, Youth and Sports of the Czech Republic.
}

\thanks{}

\subjclass[2000]{Primary 55, Secondary 14 , 16, 18}



\keywords{DGA, Mapping Space, Stable Model Categories, Noncommutative Derived Algebraic Geometry. }

\begin{abstract}
In this paper, we give a concrete description of the higher homotopy groups ($n>0$) of the mapping space $\Map_{\Algk}(R,S)$ for $R$ and $S$ unbounded differential graded algebras (DGA) over a commutative ring $k$. In the connective case,  we describe the relation between the higher (negative) Hochschild cohomology $\HH^{-n+1}(R,S)$  and higher homotopy groups $\pi_{n}\Map_{\Algk}(R,S)$, when $n>1$.  
\end{abstract}

\maketitle
\section*{Introduction}
Our work is based on the recent paper \cite{DH2010}. Given a (symmetric) monoidal model category $(\C,\otimes)$ and a compatible model structure on the category of monoids $\C^{\otimes}$ (with underlying weak equivalences and fibrations of $\C$). Suppose that we have a Dwyer-Kan model structure on the category of small $\C$-enriched categories $\Cat_{\C}$ (weak equivalences are homotopy enriched fully faithful and homotopy essentially surjective functors). The main idea is that the mapping spaces $\Map$ of these three model structures are closely related under some assumptions (\textbf{six Axioms} (\ref{axiom6}) described in \cite[section 3]{DH2010}). The none obvious axiom is the third one \ref{axiom3}. Our paper is a concrete application of this idea in the setting, where
\begin{itemize}
\item $\C$ is the symmetric monoidal category of \textbf {unbounded differential graded modules} (DG modules for short) over a commutative ring $k$ \cite{Hovey}. 
\item $\C^{\otimes}$ is the model category of unbounded DG $k$-algebras \cite{SS}.
\item $\Cat_{\C}$ is the model category of (small) DG categories, denoted by $dg$-$\Cat$ \cite{tabuada2005structure}. 
\end{itemize} 

Before going to our main point, we illustrate the previous idea in the \textit{topological} setting when $\C=\Top$. There is a well known fiber sequence (Cf. \cite[p. 6]{DH2010})  for given topological groups $G$ and $H$, which is
\begin{equation}\label{fibration}
\map_{\ast}(BH,BG)\rightarrow \map(BH,BG)\rightarrow BG,
\end{equation}
where $BG$ and $BH$ are the classifying spaces of $G$ and $H$. Notice that if $G$ and $H$ are discrete groups, then $\pi_{0}\map(BH,BG)=\mathrm{Rep}_{G}(H)$ is the set of equivalence classes of representations of $H$ in $G$.
The interpretation in the categorical setting is as follows. Let $\Cat_{\Top}$ be the model category of small topological categories \cite{Amrani1}, where weak equivalences are Dwyer-Kan equivalences. Denote by $\Top^{\otimes}$ the model category of topological monoids. The previous fiber sequence (\ref{fibration}) has a model-categorical translation in terms of mapping spaces:
\begin{equation}\label{master}
\Map_{\Top^{\otimes}}(H,G)\rightarrow \Map_{\Cat_{\Top}}(\mathsf{H},\mathsf{G})\rightarrow \Map_{\Cat_{\Top}}(\ast,\mathsf{G}),
\end{equation}
where $\mathsf{G}$ (resp. $\mathsf{H}$) is the topological category with one object and endomorphism monoid $G$ (resp $H$). 
Here, we had supposed that $G$ and $H$ are topological groups. The fiber sequence (\ref{master})  is still valid for topological monoids. and coincides with \ref{fibration} in the case where $G$ and $H$ are topological groups.  \\
\textbf{The goal} in this paper is the construction of fiber sequence (\ref{master}) in the unbounded differential graded setting (Cf. \ref{Hess2}), namely
\begin{equation}\label{master1}
\Map_{\Algk}(R,S)\rightarrow\Map_{dg-\Cat}(\RR,\SSS)\rightarrow\Map_{dg-\Cat}(\mathsf{k},\SSS).
\end{equation}
\section*{Notations}
In what follows, all model structures are defined by taking homology isomorphisms for weak equivalences and degree-wise surjections for fibrations. 
\begin{itemize}
\item $k$ is a fixed commutative ring of any characteristic.
\item $\Chk$ the stable symmetric monoidal closed model category of differential graded $k$-modules. Our convention is the cohomological gradation i.e., the differentials increase the degree by  +1.  
\item The (\textbf{derived when necessary}) tensor product over $k$ of differential graded $k$-modules is denoted by $\otimes$. 
\item $\Algk$ is the model category of unbounded differential graded $k$-algebras \cite{SS} i.e., the category of monoids in $\Chk$. 
\item $\Algr$ (differential graded $R$-algebras) is the model category of graded differential $k$-algebras under a fixed graded differential $k$-algebra $R$, i.e., objects are morphisms $R\rightarrow A$ in $\Algk$.  
\item $\Algrs$ is the model category of graded differential $k$-algebras under a fixed graded differential $k$ algebras $R$ and $S$ i.e. objects are pairs of morphisms $R\rightarrow A,~ S\rightarrow A$ in $\Algk$. 

\item For any differential graded $k$-algebras $R$ and $S$ we denote by $\Chrs$ the stable model category of differential graded $R-S$-bimodules. The category  $\Chrs^{0}$ is the model category of pointed DG $R-S$-modules i.e., objects are coming with an extra map $k\rightarrow M$. 
\item We denote the \textbf{derived} mapping space of a model category by $\Map$.  
\item The n-th homology group of a differential graded $k$-algebra $R$ is denoted by $\coh^{n}R$ 
\item  The suspension functor $\Sigma: \Chr\rightarrow \Chr$ is defined as follows $(\Sigma M)_{n}=M_{n+1}$. Obviously, this functor has an inverse denoted by $\Sigma^{-1}$. 
\item Let $R\in\Algk$, we denote the derived category of $R$ by $\mathsf{D}_{R}$ which is the homotopy category of DG $R$-modules, i.e., $\Ho(\Chr)$. For more details Cf. \cite{Hovey}.

\end{itemize}

\subsection{Ext functor and Hochschild cohomology }
In this paragraph, we recall a well known translation between notions defined in \textit{Algebraic Geometry} and \textit{Algebraic Topology}. We use the same conventions as in \cite{gelfand2003methods}.
Let $R\in \Algk $, and $M, N\in\Chr $. In the stable model category of $\Chr$  \cite{Hovey, krause2007derived} and for $n\in \mathbb{Z}$,
\begin{eqnarray*} \label{hochschild}
\Ext^{n}_{R}(N,M) & \simeq &\mathsf{D}_{R}(N,\Sigma^{n} M)\\
&\simeq & \Ho(\Chr)(N,\Sigma^{n} M)\\
&\simeq& \pi_{0}\Map_{\Chr}(N,\Sigma^{n}M).
\end{eqnarray*}
\begin{remark}
Our gradation is the same as in \cite{krause2007derived} and opposite to the one used in \cite{Hovey}.
\end{remark}
\begin{remark}\label{remarque1} 
Recall that the model category $\Chr$ is naturally pointed, and the functor $\Sigma^{-1}$ is the loop functor. Therefore, it  follows by \cite[Lemma 6.1.2]{Hovey}, that for any $n\geq 0$ there is a weak homotopy equivalence of pointed simplicial sets, where the base point is the zero morphism,
$$\Map_{\Chr}(M,\Sigma^{-n}N)\sim \Omega^{n}\Map_{\Chr}(M,N).$$ 
For $n\geq 0$, we have the following group isomorphisms: 
 \begin{eqnarray*} \label{Hochschild1}
\Ext^{-n}_{R}(N,M) &\simeq & \pi_{0}\Map_{\Chr}(N,\Sigma^{-n} M)\\
&\simeq & \pi_{n}\Map_{\Chr}(N,M).
\end{eqnarray*} 
\end{remark}
\begin{remark}
If $R$ is a $k$-algebra and $M,~N$ are any two $R$-modules i.e., $R,M$ and $N$ are DG modules concentrated in degree 0, then $\Ext^{n}_{R}(M,N)=0$ for $n<0$  (Cf. \cite{krause2007derived}).     
\end{remark}
\begin{definition}\label{Hochschild3}
Let $R\in \Algk$  and let $M$ be a DG $R$-bimodule. The Hochschild cohomology of $R$ with coefficient in $M$ is defined for all $n\in\mathbb{Z}$, by
\begin{eqnarray*}
\HH^{n}(R,M) &\simeq& \Ext^{n}_{R\otimes R^{op}}(R,M)\\
&\simeq &\pi_{0}\Map_{\Chrr}(R,\Sigma^{n}M). 
\end{eqnarray*}
If $M=R$, we denote the Hochschild cohomology of $R$ with coefficient in $R$ simply by $\HH^{\ast}(R)$.   
\end{definition}
\begin{remark}
The correct definition for the Hochschild cohomology $\HH^{n}(R,M)$ is  $\Ext^{n}_{R\otimes^{\mathbf{L}} R^{op}}(R,M)$, but we took the liberty to denote the derived tensor product as an ordinary tensor product!
\end{remark}
\begin{remark}\label{remarque2}
For any DG $R$-module $N$, we recall that (Cf. \cite{krause2007derived})
$$\pi_{n}\Map_{\Chr}(R,N)_{\ast}\simeq\mathsf{D}_{R}(R,\Sigma^{-n}N)\simeq\coh^{-n}(N).$$
\end{remark}
\begin{remark}
When the homotopy groups are computed without mentioning the base point, it will mean that the base point is the null morphism. 
\end{remark}
For any DG algebra $R$ and any $R$-bimodule M, the usual definition of the Hochschild cohomology is given by $\HH^{\ast}(R,M)=\coh^{\ast}\mathrm{Hom}_{R\otimes R^{op}}(R,M)$ (e.g. \cite[3.14]{dugger2007topological}), where $\mathrm{Hom}_{R\otimes R^{op}}(R,-):\Chrr\rightarrow \Chk$, is the right (derived) functor having as left (derived) adjoint $R\otimes -$. If $n\geq 0$, applying \cite[Theorem 2.12]{DH2010}, we obtain the following group isomorphisms  
\begin{eqnarray*}
\coh^{-n}\mathrm{Hom}_{R\otimes R^{op}}(R,M) &\simeq &\pi_{n}\Map_{\Chk}(k,\mathrm{Hom}_{R\otimes R^{op}}(R,M))\\
&\simeq & \pi_{n}\Map_{\Chrr}(R,M)\\
&\simeq & \mathrm{Ext}^{-n}_{R\otimes R^{op}}(R,M).
\end{eqnarray*}
\begin{remark}
In order to be clear, by derived functor of $\mathrm{Hom}_{R\otimes R^{op}}(R,M)$ we mean the right derived functor $\mathbf{R}\mathrm{Hom}_{R\otimes R^{op}}(R,M)$.
\end{remark}
\section{Main results}
We start by fixing a morphism $\phi: R\rightarrow S$ in $\Algk$ (such that $S$ is a strict DG $k$-algebra \ref{lemma1bis}). Our main result concerns the higher homotopy groups of the mapping space
$$\pi_{n}\Map_{\Algk}(R,S)_{\phi}:=[R,S]_{n}^{\otimes}~\textrm{for}~n>1.$$
We give an explicit long exact sequence relating these higher homotopy groups with  $\coh^{\ast}S$ and (negative) Hochschild  cohomology $\HH^{\ast}(R,S)$. Moreover, we study  the case  
  $$\pi_{1}\Map_{\Algk}(R,R)_{id}:=[R,R]_{1}^{\otimes}.$$
\textbf{Theorem A} (cf. \ref{th2})\\
\textit{Let R in $\Algk$ (a strict DG $k$-algebra \ref{lemma1bis}). There is an exact sequence of groups
$$\dots\rightarrow\coh^{-1} (R)\rightarrow  [R,R]_{1}^{\otimes}\rightarrow \HH^{0}(R)^{\star}\rightarrow \coh^{0}(R)^{\star},$$
where $\HH^{0}(R)^{\star}$ and $\coh^{0}(R)^{\star}$ are the groups of units in the rings $\HH^{0}(R)$ and $\coh^{0}(R)$. }\\\\
\textbf{Theorem B} (cf. \ref{th1})\\
\textit{Let $\phi: R\rightarrow S$ be a morphism in $\Algk$  (such that $S$ is a strict DG $k$-algebra \ref{lemma1bis}). There is an exact sequence of abelian groups  
$$\coh^{-1}(S)\leftarrow \HH^{-1}(R,S)\leftarrow [R,S]^{\otimes}_{2}\leftarrow \coh^{-2}(S)\leftarrow \HH^{-2}(R,S)\leftarrow [R,S]^{\otimes}_{3}\leftarrow \dots  $$  
where $S$ is seen as an $R$-bimodule via $\phi$.}\\\\
\textbf{Lemma C} (cf \ref{derivative})\\
 \textit{If $R$ is a connective DG $k$-algebra, $R\oplus M$ a connective, square-zero extension, with $\phi:R\rightarrow R\oplus M$ the obvious inclusion, then  for all $n>1$
$$\Derive^{-n}_{k}(R,M)\oplus \HH^{-n+1}(R)\simeq \pi_{n}\Map_{\Algk}(R,R\oplus M)_{\phi}.$$}


\section{The six Axioms}\label{axiom6}
We verify the six Axioms described in \cite[section 3]{DH2010}, for the following categories $\Chk,~ \Chrs$ and $\Algk$. These Axioms will be proved and defined in details (essentially the third Axiom \ref{ax3}) the rest are more or less obvious in our setting. 
\subsection{Axiom I}\cite[3.1]{DH2010}
The model structures on $\Chs, ~ \Chrs$ and $\Algk$ are all compatible in the sense that the weak equivalences and fibrations are the underlying weak equivalences and fibrations  in $\Chk$. Hence, there is nothing to verify. 
\subsection{Axiom II}\cite[3.2]{DH2010}
Let $\Chrs^{0}$ denote the category of pointed DG $R-S$-modules $X$ i.e., coming with a morphism $k\rightarrow X$ in $\Chk$. The second axiom requires the existence of a Quilen adjunction  
\begin{equation}\label{adj}
\xymatrix{ \Chrs^{0}  \ar@<2pt>[r]^{ F} & \Algrs  \ar@<2pt>[l]^{U}}
\end{equation}
since all the involved categories are locally presentable \cite[proposition 3.7]{shipley2007hz}. The forgetful functor $U$ commutes with limits and directed colimits, therefore the left adjoint exists by \cite[p. 65]{adamek1994locally}. The existence of a model structure on $ \Chrs^{0}$ (where weak equivalence and fibrations are underlying weak equivalences and fibrations of $\Chrs$) is guaranteed by \cite[Proposition 1.1.8]{Hovey}. Moreover, it is a Quillen adjunction since fibrations and weak equivalences are those of the underlying category $\Chk$, by definition. We denote the image of the unit  $1\in k\rightarrow X$ also by $1\in X$. 
Now, we give a concrete description of the functor $F$. 
\begin{definition}\label{defF}
The DG $R-S$-algebra $F(X)$ is the quotient of the free DG $k$-algebra 
$$ T(X)=k.1\oplus X\oplus X\otimes X\oplus X^{\otimes^{3}}\otimes\dots$$
subject to the following relations:
\begin{enumerate}
\item For any $n\in\mathbb{N}^{\ast}$, $\underbrace{1\otimes 1\otimes \dots \otimes 1}_{\text{n times}}\sim 1$ and the differential of 1 is 0. 

 \item For any $s\in S$  and any  $x_{1}\otimes\dots \otimes x_{i-1}\otimes x_{i+1}\dots\otimes x_{n}\in T(X)$
$$x_{1}\otimes\dots \otimes x_{i-1}\otimes 1.s\otimes x_{i+1}\dots\otimes x_{n}\sim x_{1}\otimes\dots \otimes x_{i-1}.s\otimes x_{i+1}\dots\otimes x_{n}.$$
\item For any $r\in R$ and any $x_{1}\otimes\dots \otimes x_{j-1}\otimes x_{j+1}\dots\otimes x_{m}\in T(X)$
$$x_{1}\otimes\dots \otimes x_{j-1}\otimes r.1\otimes x_{j+1}\dots\otimes x_{m}\sim x_{1}\otimes\dots \otimes x_{j-1}\otimes r.x_{j+1}\dots\otimes x_{m}.$$
\end{enumerate} 
\end{definition}
\begin{remark}
The first relation (1) of the previous definition \ref{defF} is actually redundant.   
\end{remark}
Recall that the differentials of $T(X)$ are given by (the sign depends on the degree of elements) 
$$ d(x_{1}\otimes\dots \otimes x_{i}\otimes x_{i+1}\dots\otimes x_{n})=\sum_{i=1}^{n}\pm ~x_{1}\otimes\dots \otimes dx_{i}\otimes x_{i+1}\dots\otimes x_{n}.$$ 
We can define the following morphisms in $\Algk$ by universal property of $F(X)$
\begin{itemize}
\item The  morphism $S\rightarrow F(X)$ takes $s$ to $1\otimes 1.s$. 
\item The morphim $R\rightarrow F(X)$ takes $r$ to $r.1\otimes 1$.
\item Notice that in $F(X)$ we have by definition $1\otimes 1.s = 1.s = 1.s\otimes 1$ for any $s\in S$ and similarly  $1\otimes r.1 = r.1 = r.1\otimes 1$ for any $r\in R$. 
\end{itemize}

\begin{lemma}\label{universal}
The functor $F: \Chrs^{0}\rightarrow \Algrs$ is a left adjoint of the forgetful functor $U$. 
\end{lemma}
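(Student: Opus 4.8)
\textbf{Proof proposal for Lemma \ref{universal}.}

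The plan is to establish the adjunction $\Hom_{\Algrs}(F(X),A)\cong\Hom_{\Chrs^{0}}(X,U(A))$ directly, by tracing through the explicit presentation of $F(X)$ given in Definition \ref{defF}. First I would unwind the right-hand side: a morphism $X\to U(A)$ in $\Chrs^{0}$ is a map of DG $R$-$S$-bimodules that additionally sends the distinguished element $1\in X$ to the unit $1\in A$. Given such a map $f$, I would extend it to the free DG $k$-algebra $T(X)=k.1\oplus X\oplus X^{\otimes 2}\oplus\cdots$ by the universal property of the tensor algebra, sending $x_{1}\otimes\cdots\otimes x_{n}$ to $f(x_{1})\cdots f(x_{n})$; since $f$ is a chain map and the differential on $T(X)$ is the graded Leibniz extension, this is automatically a morphism of DG $k$-algebras.

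The heart of the argument is then to check that this morphism $T(X)\to A$ factors through the three families of relations defining $F(X)$, and that the resulting map $F(X)\to A$ is a morphism in $\Algrs$ (i.e. compatible with the structure maps $R\to F(X)$, $S\to F(X)$ and $R\to A$, $S\to A$). Relation (1) is handled because $f(1)=1$ is the algebra unit, so any tensor power of $1$ maps to $1$ and $d(1)=0$ is forced; as the paper already remarks, this relation is in fact redundant. Relations (2) and (3) are exactly the statements that the $S$- and $R$-bimodule structure on $X$ is carried by $f$ to the bimodule structure that $U(A)$ inherits from the algebra maps $S\to A$ and $R\to A$ — so $f(1.s)=f(1)\cdot\iota_{S}(s)=\iota_{S}(s)$ and the relevant tensors on both sides of (2) map to the same product in $A$, and symmetrically for (3). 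This shows the assignment $f\mapsto(F(X)\to A)$ is well-defined.

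For the reverse direction, given $g\colon F(X)\to A$ in $\Algrs$, I would restrict $g$ along the inclusion $X\hookrightarrow F(X)$ (the degree-one summand of $T(X)$, which survives in the quotient); this restriction is a chain map, it is $R$-$S$-bilinear because $g$ commutes with the structure maps and the bimodule actions on $X$ are induced by left/right multiplication by the images of $R$ and $S$ inside $F(X)$, and it sends $1\in X$ to $g(1)=1\in A$ since $g$ is unital — hence it lies in $\Chrs^{0}$. Finally I would verify these two constructions are mutually inverse and natural in $X$ and $A$: one composite is the identity because $F(X)$ is generated as a $k$-algebra by the image of $X$ together with $R$ and $S$, so a morphism out of it is determined by its restriction to $X$; the other composite is the identity by the universal property of $T(X)$ together with the observation that our factorization through the relations reconstructs the original extension. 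Naturality is routine. The only genuinely delicate point is bookkeeping the signs in the differential on $T(X)$ and confirming that the relations of Definition \ref{defF} form a differential ideal (so the quotient is a DG algebra at all) — but relations (1)--(3) are each visibly stable under $d$ given that $d(1)=0$ and $d$ is $k$-linear and Leibniz, so there is no real obstacle, only care.
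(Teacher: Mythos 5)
Your proposal is correct and follows essentially the same route as the paper's proof: extend a pointed bimodule map $f\colon X\to U(A)$ multiplicatively over tensors, check that the relations of Definition \ref{defF} are respected using the $R$- and $S$-bilinearity of $f$, and recover the inverse by restricting an algebra map $F(X)\to A$ along the canonical map $X\to F(X)$. Your added remarks (factoring explicitly through the tensor algebra $T(X)$, checking the relations generate a differential ideal, and naturality) only make explicit points the paper leaves implicit.
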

\begin{proof}
In order to prove that $F$ is a left adjoint, we check the universal property i.e., given a morphism 
$f: X\rightarrow U(A)$ in $\Chrs^{0}$ where $A\in  \Algrs$, there is a unique extension 
$\overline{f}:F(X)\rightarrow A$ of DG $R-S$-algebras. By definition, the element $1\in X$ goes to the unit 
$e$ of  $A$, such that $f(r.1)= r.e$ and $f(1.s)=e.s$. The equivalence class of the tensor element $x_{1}\otimes x_{2}\dots \otimes x_{n}$ in $F(X)$  
is sent by $\overline{f}$ to $f(x_{1}).f(x_{2})\dots f(x_{n})$. This morphism is well defined since $f$ is a map of right DG $S$-modules. Thus, $x_{1}\otimes x_{j}\otimes 1.s\otimes x_{j+2}\dots x_{n}$ and $x_{1}\otimes\dots x_{j}.s\otimes x_{j+2}\dots x_{n}$  have the same image. By analogy, elements of the form $x_{1}\dots\otimes x_{i}\otimes r.1\otimes x_{i+2}\dots x_{n}$ and $x_{1}\dots\otimes x_{i}\otimes r.x_{i+2}\dots x_{m}$ have the same image by $\overline{f}$. Hence, $\overline{f}$ is uniquely defined. Moreover, any map
$\overline{g}:F(X)\rightarrow A$ defines, obviously, a unique map of DG $R-S$-bimodules $g:X\rightarrow UA$. Therefore, there is an isomorphism of sets 
$$\Chrs^{0}(X, UA)\simeq \Algrs(F(X),A).$$
\end{proof}
\begin{remark}
Our construction of the functor $F$ was inspired by a topological analogy. The adjunction between the category of \textbf{pointed} topological spaces and the category of topological monoids is given by the forgetful functor  and James's functor as left adjoint, we refer to \cite{Vogt}.  
\end{remark}

\subsection{Axiom IV, V and VI}\cite[3.5, 3.6, 3.8]{DH2010}
These axioms are easy to verify. For the fourth Axiom, it is enough to take $R$ cofibrant in $\Algk$, while the fifth Axiom holds if $S$ is cofibrant in $\Chs$, which is trivial since $k$ is cofibrant in $\Chk$.  
The last Axiom requires that the two maps defined below are weak equivalences.
\begin{itemize}
\item We need $k^{c}\otimes S\rightarrow k\otimes S $ to be an equivalence of right $S$-modules, where $k^{c}$ is some cofibrant replacement of $k$ in $\Chk$. This is satisfied, since we can take $k^{c}=k$. 
\item For any map $R\rightarrow S$ in $\Algk$, let $R^{c}$ be a cofibrant replacement of $R$ in the category of DG $R$-bimodules. The map $R^{c}\otimes_{R} S\rightarrow R\otimes_{R}S\simeq S$ is a weak equivalence of $R-S$-modules (in $\Chk$ in fact). In order to prove the statement, we use the a concrete model for the cofibrant replacement $R^{c}$, which is given by the Bar construction $B(R)$. Since $R$ is cofibrant as DG $R$-module, then by  
\cite[Proposition 7.5]{EKMM}, the natural map $B(R,R,S)\rightarrow R\otimes_{R}S\simeq S$ is a weak equivalence in $\Chrs$. On the other hand, $B(R,R,S)$ is naturally isomorphic to $B(R)\otimes_{R} S$. We conclude that the morphism $R^{c}\otimes_{R}S\rightarrow S$ is an equivalence of DG $R-S$-bimodules. 
\end{itemize}  
 

 \subsection{Axiom III}\label{axiom3} 
\begin{definition}\label{defff} \cite[3.4]{DH2010} A \textbf{distinguished object} in $\Chrs^{0}$ is a pointed DG $R-S$-bimodule, such that $k\rightarrow X$ induces an equivalence $S\simeq k\otimes S\rightarrow X$ of right $S$-module. An object $A$ in  $\Algrs$ is said to be \textit{distinguished} if the map induced by the unit $S\simeq k\otimes S\rightarrow A$ is a weak equivalence. 
\end{definition}
\begin{definition}\label{ax3}[\textbf{Axiom III}]
We say that the functor $F:\Chrs^{0}\rightarrow \Algrs$ verifies the third axiom if it sends cofibrant distinguished objects in $\Chrs^{0}$ to (cofibrant) distinguished object in $\Algrs$.
\end{definition}
\begin{lemma}\label{lemma1}
Let $R$ be a cofibrant DG $k$-algebra and $M$ be a cofibrant DG $R-S$-bimodule such that $M$ is zigzag equivalent to $S$ as a right DG $S$-module. Then there is a map $\phi: R\rightarrow S$ of DG-algebras and a weak equivalence $M\rightarrow S$ of DG $R-S$-bimodules (where the left action of $R$ on $S$ is induced by $\phi$). 
\end{lemma}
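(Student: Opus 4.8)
The plan is to produce the algebra map $\phi$ first from the bimodule structure, and then upgrade the given zigzag equivalence to a genuine (one-directional) weak equivalence of bimodules by a cofibrancy argument. For the first part, note that $M$ carries a left $R$-action and a right $S$-action. Evaluating the left action at the unit $1 \in R$ gives no information, but the zigzag equivalence $M \simeq S$ of right $S$-modules means that on homology $\coh^0(M) \cong \coh^0(S)$, and more precisely that $M$ is isomorphic to $S$ in $\mathsf{D}_S$. The element $1 \in \coh^0(S)$ corresponds to a class in $\coh^0(M)$, and the point is to choose an actual cycle $e \in M^0$ representing it; since $R$ is cofibrant, the left multiplication map $R \to \mathrm{End}_S(M)$ (internal endomorphisms in the derived sense) composed with ``evaluate at $e$'' gives, after rectification, a map of DG algebras $R \to S$. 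More carefully: I would use that $M$, being a cofibrant $R$-$S$-bimodule equivalent to $S$ on the right, identifies $\mathbf{R}\mathrm{Hom}_S(M,M)$ with $\mathbf{R}\mathrm{Hom}_S(S,S) \simeq S$ as DG algebras (this is where the right-module equivalence is used decisively), and the left $R$-action is exactly a DG-algebra map $R \to \mathbf{R}\mathrm{Hom}_S(M,M) \simeq S$, which I take to be $\phi$.

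Next I would construct the bimodule weak equivalence $M \to S$ where $S$ is given the $R$-$S$-bimodule structure via $\phi$. Having fixed $\phi$ and the cycle $e \in M^0$, define a right $S$-module map $S \to M$ by $s \mapsto e \cdot s$; this is the unit of the identification above and is a weak equivalence of right $S$-modules by construction (it realizes the class $1 \in \coh^0(M)$). The issue is that I want a map in the \emph{correct} direction, $M \to S$, compatible with \emph{both} actions. Since $M$ is cofibrant as an $R$-$S$-bimodule and $S$ (with its $\phi$-twisted structure) is fibrant (everything is fibrant here, fibrations being surjections), and since $S \to M$ is a weak equivalence of $R$-$S$-bimodules — it is a weak equivalence of right $S$-modules, and it is $R$-linear because $e$ represents the image of $1$ under $\phi$ by the very definition of $\phi$ — I can invert it up to homotopy in the model category $\Chrs$: there is a homotopy inverse $M \to S$ of $R$-$S$-bimodules, which is the desired weak equivalence. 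Equivalently, lift the identity $M \to M$ through the acyclic... no: factor and use that cofibrant-fibrant objects have the property that weak equivalences between them are homotopy equivalences, so $S \to M$ has a two-sided homotopy inverse $M \to S$.

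The main obstacle, and the step deserving the most care, is verifying that the cycle $e$ can be chosen so that left multiplication by $\phi(r)$ on $M$ agrees with $r \cdot e$ — i.e., that the DG-algebra map $\phi$ extracted from $\mathbf{R}\mathrm{Hom}_S(M,M)$ and the chosen point $e$ are mutually compatible on the nose rather than merely up to homotopy. This is a rectification problem: $\mathbf{R}\mathrm{Hom}_S(M,M)$ is only an $A_\infty$- (or DG up to quasi-iso) model of $S$, and the left $R$-action is a strict DG map into the strict model $\mathrm{Hom}_S(M^{\mathrm{cof}}, M)$ computed with $M$ cofibrant. I would handle this by working with the strict endomorphism DG algebra $E := \mathrm{Hom}_S(M,M)$ (legitimate since $M$ is cofibrant over $S$), which receives a strict DG-algebra map $R \to E$ from the left action and admits a strict quasi-isomorphism $E \xrightarrow{\sim} S$ after possibly passing to a cofibrant replacement of $E$ over which $R \to E$ factors; composing gives $\phi: R \to S$ in the homotopy category, and then one chooses a strict representative using cofibrancy of $R$ in $\Algk$. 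The evaluation-at-$e$ map $E \to M$, $f \mapsto f(e)$, is then automatically compatible with the left $R$-actions, and $e := \mathrm{id}_M \mapsto$ (image of $1$) makes $S \to M$, $s \mapsto \phi(\cdot)$-twisted action of $e$, both $R$-linear and a right-$S$-equivalence, closing the argument.
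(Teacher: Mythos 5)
Your proposal correctly isolates the crux of the lemma --- the rectification problem --- but it does not actually solve it, whereas the paper disposes of it by quoting To\"en's theorem. The paper's proof is: since $R$ is cofibrant in $\Algk$ it is cofibrant as a DG module, a cofibrant $R$-$S$-bimodule is cofibrant as a right $S$-module, and then \cite[Theorem 4.2]{toen} (the description of $\Map_{dg\text{-}\Cat}(\RR,\SSS)$ by right quasi-representable bimodules) is invoked to produce, for every such $M$, a \emph{strict} DG algebra map $\phi:R\to S$ and a zigzag of weak equivalences of $R$-$S$-bimodules between $M$ and $S_\phi$; the zigzag is then straightened to a single map $M\to S$ using cofibrant replacements in $\Chrs$ (all objects being fibrant). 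Your plan is to reprove exactly the content of that citation by hand, via the strict endomorphism algebra $E=\Hom_S(M,M)$, a cofibrant replacement $E'\twoheadrightarrow E$ through which $R\to E$ lifts, a strict representative $E'\to S$, and evaluation at a cycle $e\in M^0$ representing the unit.

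The gap is in the final compatibility step, which you describe as ``automatic.'' What is needed is a map of $R$-$S$-\emph{bimodules} $S_\phi\to M$ (or $M\to S_\phi$), i.e.\ the strict identity $r\cdot e\,s=e\,\phi(r)s$ in $M$ for all $r\in R$, $s\in S$. The evaluation map $\mathrm{ev}_e:E\to M$, $f\mapsto f(e)$, is indeed left $E$-linear (hence left $R$-linear), but $E$ carries no right $S$-module structure at all, so $\mathrm{ev}_e$ cannot serve as, or be converted into, the required bimodule map; and $\phi$, having been produced only as a composite through the homotopy category (a zigzag $E\leftarrow E'\to S$ plus a choice of strict representative), is compatible with the original left action on $M$ only up to homotopy, not on the nose. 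Closing this gap is precisely the nontrivial derived-Morita rectification that \cite[Theorem 4.2]{toen} encapsulates, so as written your argument is circular-in-spirit at the one point where the lemma has real content. (A smaller point: at the end you invert $S_\phi\to M$ by saying weak equivalences between cofibrant--fibrant objects are homotopy equivalences, but $S_\phi$ is not known to be cofibrant in $\Chrs$; the correct statement to use is that a weak equivalence between fibrant objects induces a bijection on homotopy classes of maps out of the cofibrant object $M$, which is in effect how the paper straightens its zigzag.)
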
 
\begin{proof}
The proof of this lemma is based on To\"en's fundamental theorem \cite[Theorem 4.2]{toen}. To\"en's theorem compares two models for the mapping space of the model category of dg-categories.
Since $R$ is a cofibrant DG-algebra, then $R$ is a cofibrant DG-module \cite[Proposition 2.3]{toen}. By \cite[proposition 3.3]{toen}, a cofibrant DG $R-S$-module $M$ is a cofibrant DG $S$-module (forgetting the DG $R$-module structure). It follows by \cite[Theorem 4.2]{toen} that each cofibrant $R-S$-bimodule $M$ is zigzag equivalent to $S$ where the the left action of $R$ on $S$ is given by some map of DG-algebras $\phi: R\rightarrow S$. More precisely, we have the following zigzag of weak equivalences of $R-S$-bimodules
$$ S\leftarrow M_{1}\leftrightarrow M_{2}\dots\rightarrow M,$$
where $M_{i}$ are cofibrant as DG $S$-modules. We replace functorially by $M_{i}^{c}$ (cofibrant replacement in the category of $R-S$-bimodules, and hence we obtain a weak equivalence (not unique) of DG $R-S$-bimodules $M\rightarrow S$.      
\end{proof}
\begin{remark}\label{rem1}
Under the same hypothesis as in lemma \ref{lemma1}, if in addition $M$ is pointed (i.e. $k\rightarrow M$) then $M\rightarrow S$ is an equivalence of pointed DG $R-S$-bimodules.  
\end{remark}

\begin{definition}\label{lemma1bis}
Let $S$ be a DG $k$-algebra, we say that $S$ is a \textbf{strict} if the differential $d_{-1}: S_{-1}\rightarrow S_{0}$ is identically 0. 
\end{definition}
\begin{definition}\label{}
Let $S$ be a DG $k$-algebra and $1$ the unit element, a homotopy invertible element $x\in S_{0}$ is a cocycle such that there exists an other cocycle $y\in S_{0}$ with the property that $xy-1$ and $yx-1$ are boundaries 
\end{definition}
\begin{remark}
If $S$ is a strict DG $k$-algebra, then any homotopy invertible element is strictly invertible. 
\end{remark}
\begin{lemma}\label{inverse}
Suppose that $S$ is DG algebra where all homotopy invertible elements are strictly invertible. Then any weak equivalence $S\rightarrow S$ in $\Chs$ is an isomorphism.  
\end{lemma}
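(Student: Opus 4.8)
The plan is to use the elementary fact that a morphism of DG $S$-modules $f\colon S\to S$ is completely determined by the single element $x:=f(1)$, and in fact equals multiplication by $x$; hence the question reduces to whether $x$ is an invertible element of $S$.

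First I would record the shape of $f$. Since $f$ is $S$-linear one has $f(m)=xm$ for all $m$, where $x=f(1)$; as $f$ has degree $0$ and commutes with the differential, $x$ lies in $S_{0}$ and satisfies $dx=0$, i.e. $x$ is a cocycle. Passing to homology, $\coh^{*}(f)$ is then multiplication by the class $[x]\in\coh^{0}(S)$ on the graded ring $\coh^{*}(S)$. Next, using that $f$ is a weak equivalence, $\coh^{0}(f)$ is an isomorphism of the ring $\coh^{0}(S)$, so multiplication by $[x]$ is bijective on $\coh^{0}(S)$. A short ring-theoretic step upgrades this to genuine invertibility of $[x]$: surjectivity yields $[y]$ with $[x][y]=1$, and applying injectivity to the element $[y][x]-1$, which is annihilated by multiplication by $[x]$, gives $[y][x]=1$ as well, so $[x]$ is a two-sided unit in $\coh^{0}(S)$.

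Now choose a cocycle representative $y\in S_{0}$ of $[y]$; then $xy-1$ and $yx-1$ are boundaries, which is precisely the statement that $x$ is a homotopy invertible element of $S$. By the standing hypothesis on $S$, $x$ is therefore strictly invertible: there is $z\in S_{0}$ with $xz=zx=1$. Differentiating $xz=1$ and using that $x$ is (left- and right-) invertible forces $dz=0$, so $z$ is again a cocycle and multiplication by $z$ defines a morphism $g\colon S\to S$ in $\Chs$. Since $gf=fg=\mathrm{id}_{S}$, the map $f$ is an isomorphism, which proves the lemma.

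The only points requiring a little care are the passage from bijectivity of multiplication by $[x]$ to two-sided invertibility of $[x]$ in $\coh^{0}(S)$, and the observation that the strict inverse $z$ is automatically a cocycle (so that multiplication by $z$ is a chain map); the rest of the argument is formal. I do not expect any serious obstacle.
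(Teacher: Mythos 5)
Your argument is correct, and it shares the paper's basic skeleton (reduce everything to the element $x=f(1)$, show it is homotopy invertible, then invoke the hypothesis), but the route to homotopy invertibility is different. The paper argues model-categorically: $S$ is fibrant and cofibrant in $\Chs$, so the weak equivalence $f$ admits a homotopy inverse $g$, and the homotopies show that $f(1)g(1)-1$ and $g(1)f(1)-1$ are boundaries. You instead read off invertibility of $[x]$ directly from the fact that $\coh^{0}(f)$ is multiplication by $[x]$ and is bijective, with the standard two-sided-unit upgrade; this is more elementary, needing only that $f$ is a homology isomorphism and no Whitehead-type statement about $\Chs$. Your ending is also more careful than the paper's: the paper concludes abruptly that $fg(1)-1=gf(1)-1=0$, which as written takes $g(1)$ itself to be the strict inverse and really uses strictness of $S$ (no nonzero boundaries in degree $0$) rather than only the stated hypothesis that homotopy invertible elements are strictly invertible; you instead take an arbitrary strict inverse $z$, check $dz=0$ (via $x\,dz=0$ and left-invertibility of $x$), and verify that multiplication by $z$ is an $S$-linear chain map inverse to $f$. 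That extra verification is genuinely needed for the lemma as stated, so your write-up both proves the result and quietly repairs the small gap in the paper's version.
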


\begin{proof}
Any $S$-linear morphism $f:S\rightarrow S$ is determined by the image of the unit 1. Since $f$ is a weak equivalence and and $S$ is fibrant cofibrant in $\Chs$, implies that $f$ has a homotopy inverse $g$. Hence $fg(1)-1$ and $gf(1)-1$ are boundaries. But by hypothesis on $S$, we have that $fg(1)-1=gf(1)-1 =0$  
\end{proof}
\begin{remark}\label{remarqueq}
Till the end of the subsection, we will assume that $R$ is a cofibrant DG algbera and $S$ is a strict  DG $R$-algebra \ref{lemma1bis}.  
\end{remark}
\begin{lemma}\label{S-eq}
Let $S$ a DG algebra as in \ref{remarqueq}, then the universal map of DG algebras $S\rightarrow F(S)$ is an isomorphism.   
\end{lemma}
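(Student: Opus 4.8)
The plan is to unwind the construction of $F$ from Definition \ref{defF} when the input pointed bimodule is $S$ itself, equipped with the canonical basepoint $k \to S$ given by the unit. The claim is that the tensor algebra quotient $F(S)$ collapses entirely onto $S$.

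First I would observe that under the hypotheses of \ref{remarqueq} the left action of $R$ on $S$ factors through a fixed DG-algebra map $\phi : R \to S$, so in the relations (2) and (3) of Definition \ref{defF} the elements "$1.s$" and "$r.1$" both live in the single algebra $S$: relation (2) identifies $x_1 \otimes \cdots \otimes x_{i-1} \otimes (1.s) \otimes x_{i+1} \otimes \cdots$ with $x_1 \otimes \cdots \otimes (x_{i-1} \cdot s) \otimes x_{i+1} \otimes \cdots$, and relation (3) is analogous on the $R$-side but, via $\phi$, is subsumed by the $S$-relation as well. The key point is that for $X = S$ every tensor factor $x_i$ is itself an element of $S$, and in particular $x_i = x_i \cdot 1 = 1 \cdot x_i$ with $1 \in S$ the unit. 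I would then use relation (2) iteratively: in any tensor word $x_1 \otimes x_2 \otimes \cdots \otimes x_n$ with $x_i \in S$, write $x_2 = 1 \cdot x_2$ (i.e. the image of the basepoint $1 \in S$ multiplied on the right by $x_2$ inside the right $S$-module structure), so that relation (2) contracts $x_1 \otimes x_2 \otimes \cdots \otimes x_n \sim (x_1 x_2) \otimes x_3 \otimes \cdots \otimes x_n$. Repeating, the whole word collapses to the single element $x_1 x_2 \cdots x_n \in X = S$ sitting in the degree-one summand. Hence the composite $S = k.1 \oplus S \hookrightarrow T(S) \twoheadrightarrow F(S)$ is surjective.

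Next I would check this composite is a map of DG $k$-algebras and is injective, so that it is an isomorphism. Multiplicativity: in $F(S)$ the product of the classes of $s$ and $s'$ (both in the degree-one summand of $T(S)$) is the class of $s \otimes s'$, which by the contraction above equals the class of $s s' \in S$; compatibility with the unit is relation (1) together with the basepoint condition; compatibility with differentials is immediate since the differential on $T(S)$ restricts to $d_S$ on the summand $S$ and the contraction relations are differential-graded. For injectivity I would argue that the contraction map $T(S) \to S$, $x_1 \otimes \cdots \otimes x_n \mapsto x_1 \cdots x_n$ (and $1 \mapsto 1$), is a well-defined DG-algebra homomorphism that kills precisely the relations of Definition \ref{defF}, hence descends to a two-sided inverse $F(S) \to S$ of the universal map; composing the two maps in either order gives the identity. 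Finally one notes this universal map $S \to F(S)$ is exactly the structure map exhibited just before Lemma \ref{universal} (sending $s \mapsto 1 \otimes 1.s = 1.s$), so the statement is verified.

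The main obstacle I anticipate is bookkeeping the signs and the basepoint bookkeeping carefully enough to be sure the contraction $T(S) \to S$ really is a chain map and really does respect all three families of relations — in particular that relation (1) (the redundancy of $1^{\otimes n} \sim 1$, with $d1 = 0$) is consistent with $d_S(1) = 0$, which is where the hypothesis matters implicitly, and that the right $S$-action used in relation (2) is literally right multiplication in $S$ under $\phi$. None of this is deep, but it is the only place where something could go wrong, so I would write it out explicitly rather than assert it. The strictness hypothesis on $S$ (Definition \ref{lemma1bis}) is not actually needed for this particular lemma — it is the cofibrancy of $R$ that makes $\phi$ available via Lemma \ref{lemma1} — but since the standing assumption \ref{remarqueq} includes it, I would simply invoke that assumption and move on.
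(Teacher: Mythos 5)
There is a genuine gap, and it comes from how you read the pointing on $S$. In Lemma \ref{S-eq} the object $S\in\Chrs^{0}$ is \emph{not} pointed by the unit of the algebra: the map $k\rightarrow S$ is whatever pointing arises in the verification of Axiom III, namely the image of the basepoint of a distinguished bimodule $M$ under the equivalence $M\rightarrow S$ of Lemma \ref{lemma2}. Distinguishedness only forces this chosen element $1\in S_{0}$ to be \emph{homotopy} invertible, and the whole role of the strictness hypothesis \ref{lemma1bis} (via Lemma \ref{inverse}) is to upgrade this to strict invertibility. The paper's proof then contracts a word by writing $s_{i}=1\cdot(1^{-1}s_{i})$ and applying relation (2), i.e.
$$s_{1}\otimes s_{2}\otimes\cdots\otimes s_{n}\sim s_{1}\cdot 1^{-1}s_{2}\cdots 1^{-1}s_{n},$$
which uses the inverse $1^{-1}$ in an essential way. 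Your contraction step ``write $x_{2}=1\cdot x_{2}$'' is only legitimate when the basepoint is the unit $e$ of $S$; for a general basepoint an element of $S$ need not lie in $1\cdot S$, relation (2) cannot be invoked, and $F(S)$ is genuinely larger than $S$ (so the lemma is false without invertibility of the basepoint). Consequently your closing remark that the strictness hypothesis ``is not actually needed for this particular lemma'' is exactly backwards: strictness is the hypothesis that makes the basepoint strictly invertible, which is the engine of the proof; the cofibrancy of $R$ and Lemma \ref{lemma1} play no role here.

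The same misreading affects your proposed two-sided inverse: the map $T(S)\rightarrow S$, $x_{1}\otimes\cdots\otimes x_{n}\mapsto x_{1}\cdots x_{n}$, kills the relations $x\otimes 1.s-x.s$ and $r.1\otimes y-r.y$ only when $1=e$ (e.g.\ the second family maps to $\phi(r)\cdot 1\cdot y-\phi(r)\cdot y$, which need not vanish), so in the generality the lemma is actually used in Lemma \ref{maintheorem} it does not descend to $F(S)$ unless you build the twist by $1^{-1}$ into it. What you have written is a correct proof of the special case where the pointing is the unit, but that special case does not suffice for Axiom III, which is the purpose of the lemma. To repair the argument, state explicitly that the basepoint is a strictly invertible element $1$ (not necessarily $e$), perform the contraction with $1^{-1}$ as in the paper, and, if you want the clean two-sided-inverse formulation, define the candidate inverse on words by $x_{1}\otimes\cdots\otimes x_{n}\mapsto x_{1}\cdot 1^{-1}x_{2}\cdots 1^{-1}x_{n}$ and check it respects all three families of relations and the differential.
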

\begin{proof}
Recall that $R\in \Algk$ and we have a map of DG algebras $\phi: R\rightarrow S$ such that  all homotopy invertible elements of $S$ are strictly invertible. Take a representative element in $F(S)$ of the form $s_{1}\otimes\dots \otimes s_n$.  Since the chosen element $1$ (not the unit  in general) of $S$ is strictly invertible, the element $s_{1}\otimes\dots \otimes s_n$ can be reduced to an element of $S$ by using only relations (1) and (2) in \ref{defF}. More precisely 
$$s_{1}\otimes s_2\dots \otimes s_n= s_{1}\otimes1.1^{-1}s_2\dots \otimes 1.1^{-1}s_n\sim s_1.1^{-1}s_{2}\dots 1^{-1}s_n=~s.$$
Hence, the map $S\rightarrow F(S)$ is an isomorphism.
\end{proof}
\begin{definition}\label{seconddef}
Let $I: \Chrs^{0}\rightarrow \Chrs$ be the functor defined as follows: 
$I(X)$ is a two sided ideal of $T(X)$ generated by the relations:
\begin{enumerate}
\item $x\otimes 1.s - x.s$ for any element $s\in S$ and any element $x\in X$ where 1 is the image of the unit $k\rightarrow X$.
\item $r.1\otimes y -r.y$ for any element $r\in R$ and any element $y\in X$ 
\end{enumerate} 
By definition \ref{defF}, it is clear that the quotient in $\Chrs$ or $ \Chk$ of $T(X)$ by $I(X)$ is isomorphic to $F(X)$. 
\end{definition}
\begin{lemma}\label{lemmaimp}
Let $S$ be a strict DG $k$-algebra and let $\phi: R\rightarrow S$ a map in $\Algk$ which induces a left action of $R$ on $S$. Let $e$ be the unit of $S$ and fix an invertible element $1\in S$. Then $\coh^{\ast}I(X)$ is generated by $1\otimes 1-1$ as $\coh^{\ast}R \otimes \coh^{\ast}T(S)-\coh^{\ast}T(S)\otimes \coh^{\ast}S$ graded bimodule.      
\end{lemma}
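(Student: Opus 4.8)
The plan is to analyze the two-sided ideal $I(X) \subseteq T(X)$ directly, using the strictness hypothesis on $S$ and the explicit generators in Definition \ref{seconddef}. First I would set up notation: write $T(S)$ for the tensor algebra on the underlying pointed DG module $S$ (with its chosen invertible element $1 \in S_0$), and observe that since $S$ is strict, $\coh^{\ast}$ commutes with the tensor algebra construction in the sense that $\coh^{\ast}T(X)$ is the tensor algebra on $\coh^{\ast}X$ over $\coh^{\ast}k = k$ — this is the point where strictness of $S$ (hence no interference from $d_{-1}$) is used to control the cohomology. The key structural observation is that the generator $x \otimes 1.s - x.s$ of type (1) can, using the invertibility of $1$, be rewritten: $x \otimes 1.s - x.s = (x \otimes 1 - x)\cdot(1^{-1}.s)$ after passing through the relation, so all type-(1) generators are obtained from the single element $1 \otimes 1 - 1$ by right multiplication by elements of $T(S)\otimes S$ (the right $\coh^{\ast}T(S)$-$\coh^{\ast}S$ action), and symmetrically the type-(2) generators $r.1 \otimes y - r.y$ are obtained from $1 \otimes 1 - 1$ by left multiplication by $\coh^{\ast}R$-$\coh^{\ast}T(S)$.

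The steps in order: (i) reduce a general generator $x \otimes 1.s - x.s$ with $x = s_1 \otimes \cdots \otimes s_n$ an arbitrary tensor in $T(S)$ to the form $(\text{stuff}) \cdot (1\otimes 1 - 1) \cdot (\text{stuff})$ by inserting $1.1^{-1}$ and moving scalars, exactly as in the proof of Lemma \ref{S-eq}; (ii) argue that on cohomology this exhibits $\coh^{\ast}I(X)$ as the sub-bimodule of $\coh^{\ast}T(X)$ generated by the class of $1 \otimes 1 - 1$; (iii) check that $1\otimes 1 - 1$ is indeed a cocycle (immediate: $d(1)=0$ so $d(1\otimes 1)=0$), so the statement makes sense at the level of cohomology; (iv) verify that no further cohomological relations are forced — i.e. that the bimodule generated by $1\otimes 1 - 1$ in cohomology is all of $\coh^{\ast}I(X)$ and not something smaller — by comparing the short exact sequence $0 \to I(X) \to T(X) \to F(X) \to 0$ with the analogous decomposition of $T(X)$ as $\coh^{\ast}R \otimes \coh^{\ast}T(S) \otimes \coh^{\ast}S$-modules and using that $F(X)$ is computed by the relations of Definition \ref{defF}.

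The main obstacle I expect is step (ii)/(iv): passing from the statement about \emph{generators of the ideal in $T(X)$} to the statement about \emph{generators of the cohomology $\coh^{\ast}I(X)$ as a bimodule}. A priori, taking cohomology of a sub-DG-module need not be exact, and the bimodule structure on $\coh^{\ast}I(X)$ is inherited from $\coh^{\ast}T(X)$ only after one knows $\coh^{\ast}I(X) \hookrightarrow \coh^{\ast}T(X)$ behaves well; so I would need either a splitting of $T(X) \to F(X)$ at the level of graded modules (available because, forgetting differentials and using the invertibility of $1$, $F(X) \cong S$-reduced words decompose $T(X)$ cleanly) compatible enough with the differential after taking homology, or a direct filtration argument on word-length in $T(X)$ showing the long exact sequence degenerates. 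The strictness of $S$ is what makes such a splitting-after-cohomology argument go through, since it prevents the degree $-1$ differential from mixing the word-length filtration pieces in a way that would obstruct the bimodule generation claim. Once that homological bookkeeping is in place, the identification of the single generator $1\otimes 1 - 1$ is forced by the computation already carried out in Lemma \ref{S-eq}.
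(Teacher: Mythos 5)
There is a genuine gap, and it sits exactly where the paper has to do real work. Your ``key structural observation'' is false for the type-(1) generators: in the bimodule structure of the statement, the right action of $\coh^{\ast}T(S)\otimes\coh^{\ast}S$ concatenates words on the right and multiplies the \emph{last} tensor letter, so it can never alter the first letter; starting from $1\otimes 1-1$, right multiplication only yields elements of the form $1\otimes 1\otimes w.s-1\otimes w.s$, i.e.\ type-(1) relations whose first letter is $1$. To reach $x\otimes 1.s-x.s$ for an arbitrary $x\in S$ you must multiply the first letter by $x1^{-1}$, and the only left action available on the first letter in the prescribed bimodule structure is the action of $R$ through $\phi$ (the $T(S)$ part of the left action is mere concatenation). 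Hence you need $x1^{-1}=\phi(r)$ for some $r\in R$, which is exactly why the paper first assumes $\phi$ is a fibration (surjective), where $x\otimes 1.s-x.s$ is the image of $1\otimes 1-1$ under the left $R$-action by a lift of $x1^{-1}$ followed by the right action of $s$, and then handles a general $\phi$ by factoring it as a trivial cofibration followed by a fibration $R\to R'\twoheadrightarrow S$, observing that $I(S)$ and $\coh^{\ast}I(S)$ do not depend on which $R$-action is used (the generators involve only $S$), and transporting the generation statement along the isomorphism $\coh^{\ast}R'\simeq\coh^{\ast}R$. Your proposal contains no substitute for this surjectivity/factorization step, so the sub-bimodule you actually generate from $1\otimes 1-1$ only accounts for generators whose first letter lies in $\phi(R)\cdot 1$. (Your handling of the type-(2) generators $r.1\otimes y-r.y$ is essentially fine and corresponds to the paper's reduction of type-(2) to type-(1) via invertibility of $1$; note also the small slip that $(x\otimes 1-x)\cdot(1^{-1}.s)=x\otimes s-x.1^{-1}s$, not $x\otimes 1.s-x.s$ — the correct right factor is $s$ itself.)

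Two secondary points. First, the claim that $\coh^{\ast}T(X)$ is the tensor algebra on $\coh^{\ast}X$ needs a K\"unneth/flatness hypothesis that is not available over an arbitrary commutative ring $k$, and it is nowhere needed in the paper; strictness of $S$ enters this lemma only through the strict invertibility of the chosen element $1$ (cf.\ \ref{lemma1bis} and \ref{inverse}), not through any control of a word-length filtration. Second, your worry in steps (ii)/(iv) — passing from generators of the ideal $I(X)$ to generators of $\coh^{\ast}I(X)$ as a graded bimodule — is a legitimate one which the paper itself treats rather briskly, but the splitting/filtration sketch you offer is not developed enough to close it, and in any case it does not repair the missing argument for the first letters described above.
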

\begin{proof}
By definition $I(S)$ is generated by elements  $x\otimes 1.s - x.s$  and $r.1\otimes y -r.y$ as DG $T(S)$-bimodule \ref{seconddef}.
Since $1$ is invertible element, the second kind of generators  $r.1\otimes y -r.y$ can be reduced to the first kind of generators, more precisely 
$$r.1\otimes y -r.y=\phi(r).1\otimes y -\phi(r).y= \phi(r).1\otimes 1.1^{-1} y -\phi(r).1.1^{-1}.y$$
which is of the form $x\otimes 1.s - x.s$. 
Now, we suppose that $\phi:R\rightarrow S$ is a fibration i.e. a surjective map, then any generator of the form $x\otimes 1.s - x.s$ can be written as $x.1^{-1} (1\otimes 1-1) s$. In this case  $\coh^{\ast}I(X)$ is generated by $1\otimes 1-1$ as $ \coh^{\ast}R \otimes \coh^{\ast}T(S)-\coh^{\ast}T(S)\otimes \coh^{\ast}S$ graded bimodule. 
Now, if $\phi:R\rightarrow S$ is not a fibration, we factor $\phi$ as a trivial cofibration followed by a fibration  
$$\xymatrix{R\ar[r]^{\sim}_{\phi_{1}} & R^{'}\ar[r] _{\phi_{2}}& S},$$
Since the generators of $I(S)$ are of the form $x\otimes 1.s - x.s$ (do not depend on the action of $R$ on $S$), we conclude that $\coh^{\ast}I(S_{\phi})$ and $\coh^{\ast}I(S_{\phi_{2}})$  for both actions $\phi: R\rightarrow S$ or $\phi_{2}: R\rightarrow S$ are isomorphic.
By definition, we have that $\coh^{\ast}R^{'}\rightarrow \coh^{\ast}R $ is an isomorphism of graded algebras, we conclude that $H^{\ast}I(S)$ is generated by $1\otimes 1-1$ as  $ \coh^{\ast}R \otimes \coh^{\ast}T(S)-\coh^{\ast}T(S)\otimes \coh^{\ast}S$ graded bimodule for any map of DG algebras $\phi:R\rightarrow S$. 
\end{proof}

\begin{lemma}\label{lemma2}
Let $M$ be a cofibrant distinguished object  in $\Chrs^{0}$ (cf \ref{defff}) and $f: M\rightarrow S$ be an equivalence in $\Chrs^{0}$ (cf.\ref{lemma1}), then $F(M)\rightarrow F(S)$ is an equivalence in $\Algrs$. 
\end{lemma}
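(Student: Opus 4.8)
The goal is to show that the map $F(M)\to F(S)$ is a weak equivalence in $\Algrs$, where $f:M\to S$ is an equivalence of cofibrant distinguished objects in $\Chrs^{0}$. By Lemma \ref{S-eq}, the universal map $S\to F(S)$ is an isomorphism, so it suffices to analyze $F(M)$ and the composite $F(M)\to F(S)\cong S$. The natural strategy is to compare both sides through the decomposition $0\to I(X)\to T(X)\to F(X)\to 0$ of Definition \ref{seconddef}, exploiting the computation of $\coh^{\ast}I(X)$ given in Lemma \ref{lemmaimp}. First I would reduce to the case where $\phi:R\to S$ is a fibration, using the factorization trick already employed in the proof of Lemma \ref{lemmaimp}: replacing $R$ by a trivial cofibration replacement $R'$ does not change the homology of $I(-)$ and the generators of $I(X)$ do not depend on the $R$-action, so the general case follows from the fibration case.

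\textbf{Key steps.} Assuming $\phi$ is a fibration, I would argue as follows. Since $M$ is cofibrant and distinguished, the map $M\to S$ (equivalently $k\otimes S\simeq S\to M$ followed by $f$) is a weak equivalence of DG $k$-modules, hence $\coh^{\ast}M\to\coh^{\ast}S$ is an isomorphism of graded $k$-modules. The free algebra functor $T(-)$ on DG $k$-modules sends such an equivalence of cofibrant objects to a weak equivalence $T(M)\to T(S)$ — this is because $T(X)=\bigoplus_{n\ge 0}X^{\otimes n}$, each $X^{\otimes n}$ preserves weak equivalences between cofibrant objects (the tensor product over $k$ is its own derived functor on cofibrant DG modules), and homology commutes with the direct sum. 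Next, by Lemma \ref{lemmaimp}, $\coh^{\ast}I(M)$ is generated by $1\otimes 1 - 1$ as a $\coh^{\ast}R\otimes\coh^{\ast}T(S)$–$\coh^{\ast}T(S)\otimes\coh^{\ast}S$ graded bimodule, with the identical description for $\coh^{\ast}I(S)$; the map $T(M)\to T(S)$ carries the generator of $I(M)$ to the generator of $I(S)$ and is compatible with the bimodule structures (again using that $\coh^{\ast}M\to\coh^{\ast}S$ is an isomorphism and $\coh^{\ast}S\to\coh^{\ast}T(S)$ behaves identically on both sides). Hence $\coh^{\ast}I(M)\to\coh^{\ast}I(S)$ is an isomorphism. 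Then I would run the five lemma on the long exact sequences in homology associated to $0\to I(X)\to T(X)\to F(X)\to 0$ for $X=M$ and $X=S$, concluding that $\coh^{\ast}F(M)\to\coh^{\ast}F(S)$ is an isomorphism, i.e.\ $F(M)\to F(S)$ is a weak equivalence in $\Algrs$.

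\textbf{Main obstacle.} The delicate point is verifying that $T(M)\to T(S)$ restricts to an isomorphism $\coh^{\ast}I(M)\to\coh^{\ast}I(S)$ compatibly with the bimodule presentations of Lemma \ref{lemmaimp} — in particular that the identifications of these homologies as cyclic bimodules on the class of $1\otimes 1 - 1$ are genuinely natural in $X$, and that no extra relations or differentials in $I(M)$ (beyond those controlled by Lemma \ref{lemmaimp}) obstruct the comparison. A secondary subtlety is that $T(X)\to F(X)$ need not be a cofibration, so one must be careful that the short exact sequence of DG $k$-modules is genuinely short exact (degreewise split, which it is, since $F(X)$ is a $k$-module quotient) so that the homology long exact sequence is available. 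Once the bimodule computation of Lemma \ref{lemmaimp} is invoked as a black box and the reduction to $\phi$ a fibration is in place, the rest is a routine five-lemma argument.
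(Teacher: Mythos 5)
Your skeleton (compare $F(M)\to F(S)$ through the short exact sequences $0\to I(X)\to T(X)\to F(X)\to 0$, control the ideals, finish with the five lemma) is the same as the paper's, but the proposal has a genuine gap at exactly the step that carries the whole proof. You invoke Lemma \ref{lemmaimp} to assert that $\coh^{\ast}I(M)$ is cyclic on $1\otimes 1-1$ "with the identical description" as $\coh^{\ast}I(S)$. That lemma (despite the letter $X$ in its statement) is proved only for $I(S)$: the reduction of the generators $r.1\otimes y-r.y$ and $x\otimes 1.s-x.s$ to the single class $1\otimes 1-1$ uses that the base point $1$ is an invertible element of the algebra $S$, so that one can multiply by $1^{-1}$ inside $S$. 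For a general cofibrant distinguished $M$ the base point is merely an element of a bimodule; there is no multiplication and no $1^{-1}$, so no analogous cyclicity is available, and proving that $I(M)\to I(S)$ is a quasi-isomorphism is precisely the content of the lemma you are trying to prove. The paper handles it by factoring $f$ as a trivial cofibration followed by a trivial fibration $X\twoheadrightarrow S$ (the first half is absorbed by $F$ being left Quillen), using strictness of $S$ to produce a section $g$ of $X\to S$ in $\Chs^{0}$, and then running a pullback argument on $I(X)\to C\to I(S)$ in which the cyclicity of $\coh^{\ast}I(S)$ together with the induced section $\overline{\overline{Tg}}$ yields a two-sided inverse in cohomology. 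Note also that even granting cyclicity of $\coh^{\ast}I(M)$, "generator goes to generator" would only give surjectivity of $\coh^{\ast}I(M)\to\coh^{\ast}I(S)$, not injectivity. You yourself flag this as the "main obstacle," but it is left unresolved, so the argument is incomplete at its crux.

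A second, more repairable, gap: your justification that $T(M)\to T(S)$ is a quasi-isomorphism rests on $M$ and $S$ being cofibrant DG $k$-modules. Cofibrancy of $M$ in $\Chrs^{0}$ does not imply cofibrancy over $k$, and $S$ is an arbitrary strict algebra, so over a general commutative ring $k$ the tensor powers $(-)^{\otimes n}$ need not preserve this weak equivalence. The correct route (implicit in the paper, and one reason the section $g$ is constructed) is that $M$ and $S$ are cofibrant and fibrant as right $S$-modules, so $f$ is an $S$-linear, hence $k$-linear, chain homotopy equivalence; chain homotopy equivalences are preserved by tensor powers over $k$ and by direct sums, whence $Tf$ is a quasi-isomorphism. (Minor point: you do not need a degreewise splitting of $0\to I(X)\to T(X)\to F(X)\to 0$ to get the homology long exact sequence; it is short exact by definition of $F(X)$ as the quotient $T(X)/I(X)$.) With these two points repaired, in particular with the section-plus-pullback argument replacing the appeal to Lemma \ref{lemmaimp} for $M$, your five-lemma conclusion goes through.
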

\begin{proof}
First of all, since $M$ is cofibrant we can reduce the problem to a trivial fibration of $R-S$-bimodules, namely we factor the pointed weak equivalence in $\Chrs^{0}$ as 
$ M\rightarrowtail X\twoheadrightarrow S$. It is sufficient to prove that $F(X)\rightarrow F(S)$ is a weak equivalence. Moreover, the point $\nu: k\rightarrow S$ induces an isomorphism of DG right $S$-modules since $S$ is distinguished (because $X$ is distinguished by assumption) and has the property of the remark \ref{remarqueq} and \ref{lemma1bis}.\\
The morphism $f: M\rightarrow S$ (which is a trivial fibration in $\Chrs^{0}$) has a section $g$ in the category $\Chs^{0}$ because $k\rightarrow S$ induces an isomorphism $ k\otimes S\rightarrow S$ in $\Chs$ since the chosen element in $S$ is strictly invertible, more precisely, we have 
$$\xymatrix{ M\ar[r]^{f} & S\ar[r]^{b}_{\simeq} & S}$$ 
where $b$ is the isomorphism of $\Chs$ that takes the base point 1 of $S$ to the unit $e$ of the DGA $S$ (cf \ref{inverse}), it follows that $b\circ f$ has a section in $\Chs$ taking $e$ to the base point of $M$, hence $f$ has a section $g$ in $\Chs^{0}$ taking $1\in S$ to the base point of $M$, i.e., $f\circ g=id_{S}$. 

 This implies that the two sided ideal $I(S)$ is generated only by  elements of the form $x\otimes 1.s - x.s$. Hence, the morphism $g$ induces a  section $\overline{\overline{Tg}}~$ of $~\overline{\overline{Tf}}:I(S)\rightarrow I(X)$ such that  $\overline{\overline{Tg}}$ $\overline{\overline{Tf}}$ are restrictions of $Tg$ and $Tf$. 

For any trivial fibration of pointed DG $R-S$ module $f:X\twoheadrightarrow S$, we have the following commutative diagram:
 \def\cartesien{%
    \ar@{-}[]+R+<6pt,-1pt>;[]+RD+<6pt,-6pt>%
    \ar@{-}[]+D+<1pt,-6pt>;[]+RD+<6pt,-6pt>%
  }
$$
  \xymatrix{
    I(X) \ar@/_/[rddd]_-{\overline{\overline{Tf} }}\ar@/^/[rrrd] ^{i_3}\ar@{.>}[rd]_{i} \\
    & C \ar[rr]^{i_{1}}\ar@{>>}[dd]^{\sim}_{\overline{Tf}} \ar@/_/@{.>}[ul]_{h} \cartesien& & T(X) \ar@{>>}[dd]^{\sim}_{Tf} \\
    & & & \\
    & I(S) \ar[rr]^{i_{2}}\ar@/_/@{.>}[uu]_{\overline{Tg}} \ar@/_/@{.>}[uuul]_{\overline{\overline{Tg}}}& & T(S)\ar@/_/@{.>}[uu]_{Tg} 
  }
$$
satisfying the fallowing relations:
\begin{itemize}
\item $Tf\circ Tg=id_{T(S)}$.
\item $\overline{Tf}: C\rightarrow I(S)$ and $\overline{Tg}: I(S)\rightarrow C$  are weak equivalences and $\overline{Tf}\circ \overline{Tg}=id_{I(S)}$.

\item $\overline{\overline{Tf}}:I(S)\rightarrow I(X)$ and $\overline{\overline{Tg}}: I(S)\rightarrow I(X)$ verify  $\overline{\overline{Tf}}\circ \overline{\overline{Tg}}=id_{I(S)}$.
\item $Tf\circ i_1= i_2\circ \overline{Tf}$.
\item $i_{1}\circ i=i_3$.
\item $\overline{Tf}\circ i=\overline{\overline{Tf}}$.
\item Define $h=  \overline{\overline{Tg}}\circ \overline{Tf}: C\rightarrow I(X)$. 
\end{itemize}

Applying the cohomology functor to the previous pullback, we obtain a pullback diagram in the category of pointed $\coh^{\ast}R\otimes \coh^{\ast}T(S)-\coh^{\ast}T(S)\otimes\coh^{\ast}S $ graded bimodules:
$$
  \xymatrix{
   \coh^{\ast} I(X) \ar@/_/[rddd]_-{} \ar@/^/[rrrd] ^-{\coh^{\ast}i_3 }\ar@{.>}[rd]_{}_-{\coh^{\ast} i} \\
    & \coh^{\ast}C \ar[rr]^-{\coh^{\ast}i_1 }\ar@{>>}[dd]^{\simeq}_{} \ar@/_/@{.>}[ul]_-{\coh^{\ast}h}  \cartesien& & \coh^{\ast}T(X) \ar@{>>}[dd]^{\simeq}_{} \\
    & & & \\
    & \coh^{\ast}I(S) \ar[rr]^-{\coh^{\ast}i_{2}}  & & \coh^{\ast}T(S) 
  }
$$
The map $\coh^{\ast}\overline{\overline{Tg}}$ is uniquely defined in the category of pointed $\coh^{\ast}R\otimes \coh^{\ast}T(S)-\coh^{\ast}T(S)\otimes\coh^{\ast}S $ graded bimodules sending the element (class) $1\otimes 1-1\in \coh^{\ast} I(S)$ to $1\otimes 1-1\in \coh^{\ast}I(X)$. More precisely, the chosen point in $\coh^{\ast}T(S)$  is the image of $1\otimes1-1\in \coh^{\ast}I(S)$ by $\coh^{\ast}i_{2}$. Obviously, it determines points in $\coh^{\ast}C$ and $ \coh^{\ast}TX$ in a canonical way. Notice that $\coh^{\ast}I(X)$ is already canonically pointed in  (cohomolgy class)  $1\otimes1-1$. We deduce that $\coh^{\ast}h:\coh^{\ast}C\rightarrow \coh^{\ast} I(X)$ is uniquely defined in the category of pointed  graded $\coh^{\ast}R\otimes \coh^{\ast}T(S)-\coh^{\ast}T(S)\otimes\coh^{\ast}S$-bimodule.     
In order to prove that $i: I(X)\rightarrow C$ is weak equivalence it is sufficient to prove that $\coh^{\ast}(i_3)\circ \coh^{\ast}(h)=\coh^{\ast}(i_ 1)$.  By construction $$i_1\circ \overline{Tg}:I(S)\rightarrow C\rightarrow T(X)$$ and  $$i_{3}\circ \overline{\overline{Tg}}: I(S)\rightarrow I(X)\rightarrow T(X)$$
coincide. Hence, 
$$i_{1}\circ \overline{Tg}\circ \overline{Tf}= i_{3}\circ \overline{\overline{Tg}}\circ \overline{Tf}$$ 
it implies that  
$$i_{1}\circ \overline{Tg}\circ \overline{Tf}= i_{3}\circ h.$$
On the other hand $\coh^{\ast}\overline{Tg}\circ \coh^{\ast} \overline{Tf}=id$, which implies that  $\coh^{\ast}(i_3)\circ \coh^{\ast}(h)=\coh^{\ast}(i_ 1)$. By universality of the pullback, we obtain that $\coh^{\ast}I(X)\rightarrow \coh^{\ast} C$ is an isomorphism hence $\overline{\overline{Tf}}: I(X)\rightarrow I(S)$  is an equivalence. 
We have the following commutative diagram of short exact sequences in $\Chk$
$$
  \xymatrix{
  I(X)\ar[r]^{inc}\ar[d]^{\sim} & T(X)\ar@{>>}[r]\ar[d]^{\sim} &F(X)\ar[d]^{}\\
  I(S)\ar[r]^{inc} & T(S)\ar@{>>}[r] & F(S)
   }$$
and by five-lemma (exact sequence with five terms), we finally conclude that the map $F(X)\rightarrow F(S)$ is a weak equivalence. 
\end{proof}

In order to verify the third axiom we have to prove that the derived functor of $F:\Chrs^{0}\rightarrow \Algrs$ preserves distinguished objects. 
\begin{lemma}[\textbf{Axiom III}]\label{maintheorem}
Let $R$ be a cofibrant DG algebra, and $S$ a DG algebra such that all homotopy invertible elements are strictly invertible. Then the left derived functor of $F:\Chrs^{0}\rightarrow \Algrs$ preserves distinguished objects. 
\end{lemma}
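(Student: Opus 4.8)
The plan is to reduce the statement about the \emph{derived} functor of $F$ to the already-established Lemma~\ref{lemma2}, via the usual device of replacing an arbitrary cofibrant distinguished object by one that maps to $S$ itself. Concretely, let $M$ be a cofibrant distinguished object of $\Chrs^{0}$. By definition~\ref{defff} the unit $k\to M$ induces a right-$S$-module equivalence $S\simeq k\otimes S\to M$, so in particular $M$ is zigzag equivalent to $S$ as a right DG $S$-module. First I would apply Lemma~\ref{lemma1} together with Remark~\ref{rem1}: since $M$ is cofibrant (over a cofibrant $R$) and pointed, there is a map of DG algebras $\phi\colon R\to S$ inducing the left $R$-action on $S$, and a weak equivalence $M\to S$ of \emph{pointed} DG $R$--$S$-bimodules. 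This is precisely the input required by Lemma~\ref{lemma2}.

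Next I would invoke Lemma~\ref{lemma2} to conclude that the induced map $F(M)\to F(S)$ is a weak equivalence in $\Algrs$. Then Lemma~\ref{S-eq} identifies $F(S)$: under the hypothesis that all homotopy invertible elements of $S$ are strictly invertible (which by the remark following Definition~\ref{lemma1bis} holds in particular when $S$ is strict, and is exactly the hypothesis of the present lemma), the universal map $S\to F(S)$ is an isomorphism of DG algebras. Composing, $F(M)$ is weakly equivalent to $S$, and the composite $S\xrightarrow{\cong} F(S)\xleftarrow{\sim} F(M)$ is, up to the isomorphism, the map induced by the unit $S\simeq k\otimes S\to F(M)$; hence $F(M)$ is a distinguished object of $\Algrs$ in the sense of Definition~\ref{defff}. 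Since $F$ is a left Quillen functor (Axiom~II, adjunction~(\ref{adj}) established in Lemma~\ref{universal}), it preserves cofibrant objects, so $F(M)$ is moreover cofibrant; therefore the left derived functor $\mathbf{L}F$ may be computed on cofibrant objects as $F$ itself, and we have shown $\mathbf{L}F$ sends cofibrant distinguished objects to distinguished objects.

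The one point that needs a little care — and which I regard as the main obstacle — is the compatibility of base points and of the two left $R$-actions on $S$ when passing from ``$M$ is zigzag equivalent to $S$ as a right $S$-module'' to ``there is an \emph{honest pointed} equivalence $M\to S$ of $R$--$S$-bimodules with the left action coming from an algebra map $\phi$''. This is exactly what Lemma~\ref{lemma1} and Remark~\ref{rem1} are designed to supply (they rest on To\"en's theorem~\cite[Theorem 4.2]{toen}), so in the write-up I would simply quote them; but I would be careful to note that the $\phi$ produced there is the $R$-algebra structure map with respect to which $F(M)\to F(S)$ is a map in $\Algrs$, so that the conclusion is stated in the correct slice category. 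Everything else — cofibrancy of $F(M)$, the identification $F(S)\cong S$, and the recognition of the unit map as the relevant equivalence — is routine given the earlier lemmas.
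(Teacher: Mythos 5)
Your proposal is correct and takes essentially the same route as the paper, whose proof is simply the statement that the lemma is a consequence of Lemma \ref{S-eq} and Lemma \ref{lemma2}. Your write-up merely fills in the implicit steps (Lemma \ref{lemma1} and Remark \ref{rem1} supplying the pointed equivalence $M\to S$, and cofibrancy so that the left derived functor is computed by $F$ on cofibrant objects), which is exactly how the paper intends the reduction to work.
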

\begin{proof}
It is a consequence of Lemma \ref{S-eq} and Lemma \ref{lemma2}. 
\end{proof}
\section{The Mapping Space}

\subsection{The fundamental group of $\Map_{\Algk}(R,R)_{id}$}
 We denote the category of DG categories by $dg-\Cat$, the existence of model structure \` a la Dwyer-Kan, was initially proved in \cite{tabuada2005structure}. In \cite{toen}, To\"en gives a complete description of the mapping space of dg-categories. If $\RR$ is a cofibrant dg-category and $\SSS$ any dg-category, the mapping space $\Map_{dg-\Cat}(\RR,\SSS)$ is described as the nerve of the category of \textit{right quasi-representable} $\RR\otimes \SSS^{op}$-dg-functors \cite[Definition 4.1]{toen}. In the particular case where $\RR$ and $\SSS$ have just one object  with the underlying DG algebras $R$ and $S$ , then the right quasi-representable $\RR\otimes \SSS^{op}$-dg-functors, are exactly, the  $R-S$-bimodules which are (zig-zag) equivalent to $S$ as right $S$-module. In \cite[p.15]{DH2010}, such $R-S$-bimodules are called \textit{potentially distinguished} modules.
The nerve of the category of potentially distinguished modules is denoted by $\mathcal{M}_{R,S}$ and if $R=k$ we denote it simply by $\mathcal{M}_{S}$. Notice that if $R$ is cofibrant in $\Algk$ then $\RR$ is cofibrant in $dg-\Cat$. We summarize the previous discussion in the following Lemma. 
\begin{lemma}\label{moduli}
 The moduli space $\mathcal{M}_{R,S}$ is equivalent to $\Map_{dg-\Cat}(\RR,\SSS)$ and $\mathcal{M}_{S}$ is equivalent to $\Map_{dg-\Cat}(\mathsf{k},\SSS)$, where $\RR$ (resp. $\SSS$ and $\mathsf{k}$) is the dg-categorie with one object an the underlying endomorphim DG algebra $R$ (resp. $S$ and $k$).  
 \end{lemma}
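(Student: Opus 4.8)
The plan is to quote To\"en's description of the mapping space of $dg$-$\Cat$ and specialize it to one-object dg-categories. First I would recall that, by \cite[Theorem 4.2, Definition 4.1]{toen}, for $\RR$ a cofibrant dg-category and $\SSS$ an arbitrary dg-category the mapping space $\Map_{dg-\Cat}(\RR,\SSS)$ is weakly equivalent to the nerve of the category (or the Dwyer-Kan classifying space, equivalently the simplicial localization) of \emph{right quasi-representable} $\RR\otimes\SSS^{op}$-dg-modules together with their quasi-isomorphisms. Since $R$ is assumed cofibrant in $\Algk$, the remark preceding the statement (together with the fact that $k$ is cofibrant in $\Algk$) gives that $\RR$ and $\mathsf{k}$ are cofibrant in $dg$-$\Cat$, so the hypothesis of To\"en's theorem is met with $\RR$ (resp. $\mathsf{k}$) in the source slot.

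Next I would unwind what "$\RR\otimes\SSS^{op}$-dg-module" means when both dg-categories have a single object: such a module is precisely a DG $R\otimes S^{op}$-module, i.e.\ a DG $R$-$S$-bimodule, because the endomorphism algebra of the unique object of $\RR\otimes\SSS^{op}$ is $R\otimes S^{op}$. Then I would check that the right quasi-representability condition translates, in this one-object situation, exactly into the condition that the underlying right DG $S$-module be (zig-zag) quasi-isomorphic to $S$ itself (the free rank-one module $\SSS(-,x)$), since there is only one representable right $S$-module up to isomorphism, namely $S$. These are what are called \emph{potentially distinguished} $R$-$S$-bimodules in \cite[p.~15]{DH2010}, and $\mathcal{M}_{R,S}$ is by definition the nerve of their category (with weak equivalences as morphisms). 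Hence $\mathcal{M}_{R,S}\simeq\Map_{dg-\Cat}(\RR,\SSS)$. Taking $R=k$ gives the second assertion, with $\mathsf{k}$ the one-object dg-category on $k$ and $\mathcal{M}_S=\mathcal{M}_{k,S}$; here a potentially distinguished $k$-$S$-bimodule is just a right DG $S$-module quasi-isomorphic to $S$.

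The only genuine subtlety — and the step I expect to need the most care — is matching To\"en's model for the mapping space (which is stated in terms of the classifying space / simplicial localization of a Waldhausen-type category of modules with quasi-isomorphisms, not literally the nerve of a $1$-category) with the plain nerve $\mathcal{M}_{R,S}$ used here. I would handle this by noting that one may restrict to \emph{cofibrant} right quasi-representable bimodules, on which quasi-isomorphisms between cofibrant-fibrant objects can be taken to form an honest category whose nerve computes the correct homotopy type (this is exactly the passage used in \cite{DH2010} and is the content of their "moduli space" formalism, and it is consistent with Lemma \ref{lemma1} and Remark \ref{rem1} above, which produce the needed strict bimodule $S$ from a potentially distinguished one). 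Everything else is a direct translation of definitions, so the proof is short: invoke \cite{toen}, specialize to one object, and identify the resulting category with $\mathcal{M}_{R,S}$.
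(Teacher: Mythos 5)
Your proposal is correct and follows essentially the same route as the paper: the paper's own proof simply cites To\"en's computation of $\Map_{dg-\Cat}$ (\cite[Theorem 4.2]{toen}) together with the moduli-space formalism of \cite[section 1.13]{DH2010}, which is exactly the specialization to one-object dg-categories and the identification of right quasi-representable $\RR\otimes\SSS^{op}$-modules with potentially distinguished $R$-$S$-bimodules that you carry out. Your extra care in matching To\"en's model with the plain nerve $\mathcal{M}_{R,S}$ is a useful elaboration of what the paper leaves implicit in its citation of \cite{DH2010}.
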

 \begin{proof}
 It is a direct consequence of the computation of the mapping space in the model category of DG categories ($dg-\Cat$) cf. \cite[Theorem 4.2]{toen} and \cite[section 1.13]{DH2010}. 
 \end{proof}
 \begin{remark}\label{point}[\textbf{Base points}]
 If $\phi:R\rightarrow S$ is a morphism of DG algebras, the moduli space $\mathcal{M}_{R,S}$ is pointed at the object $S$ which is a canonical distinguished object equivalent to $S$ as a right $S$-module and has a structure of $R-S$-bimodule via $\phi$. The corresponding base point of $\Map_{dg-\Cat}(\RR,\SSS)$ is the morphism $\phi$. By the same way, the moduli space $\mathcal{M}_{S}$ is also pointed at the object $S$, and the corresponding base point of $\Map_{dg-\Cat}(\mathsf{k},\SSS)$ is the unit morphism $k\rightarrow S$. Finally, the base point of the space $\Map_{\Algk}(R,S)$ is the morphism $\phi$.
 \end{remark}
Now, we are ready to introduce the Dwyer-Hess fundamental Theorems \cite[Theorem 3.10 and 3.11]{DH2010} in the context of $\Algk$.
\begin{theorem}\label{Hess2}
Let $\phi: R\rightarrow S$ be a morphism of DG algebras (such that $S$ is a strict DG $k$-algebra \ref{lemma1bis}) . There exists a fiber sequence 
$$\Map_{\Algk}(R,S)\rightarrow \mathcal{M}_{R,S}\rightarrow\mathcal{M}_{S},$$
or equivalently, the fiber sequence:
$$ \Map_{\Algk}(R,S)\rightarrow\Map_{dg-\Cat}(\RR,\SSS)\rightarrow\Map_{dg-\Cat}(\mathsf{k},\SSS).$$

\end{theorem}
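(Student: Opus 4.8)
The plan is to deduce the statement directly from the two fundamental theorems of Dwyer--Hess, \cite[Theorem 3.10 and 3.11]{DH2010}, applied to the triple of model categories $\Chk$, $\Algk$ and $dg-\Cat$ and to the adjunction $F\dashv U$ of (\ref{adj}), once all the hypotheses of that machine are in place. The preceding section is precisely the verification of the six Axioms \ref{axiom6} in our setting: Axioms I, II, IV, V, VI are elementary (Section \ref{axiom6}), while Axiom III is Lemma \ref{maintheorem}, whose hypotheses are that $R$ be cofibrant as a DG algebra and that $S$ have the property that every homotopy invertible element is strictly invertible. The latter holds because $S$ is assumed strict (Definition \ref{lemma1bis}), by the remark following that definition. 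So the first step is simply to record that the Dwyer--Hess hypotheses are met in our situation.

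Since the three spaces in the statement are \emph{derived} mapping spaces, replacing $R$ by a cofibrant model $R^{c}$ in $\Algk$ changes none of $\Map_{\Algk}(R,S)$, $\mathcal{M}_{R,S}$ or $\Map_{dg-\Cat}(\RR,\SSS)$ up to weak equivalence (a cofibrant replacement of $R$ yields a cofibrant replacement of the DG category $\RR$, as recalled around Lemma \ref{moduli}), so the second step is to assume $R$ cofibrant without loss of generality. Then \cite[Theorem 3.10]{DH2010} produces a homotopy fibre sequence in which the fibre arises as the derived mapping space of the category $\Algrs$ of DG $k$-algebras under both $R$ and $S$ and sitting over $S$; with the structure map to $S$ being the identity and the left $R$-action on $S$ being $\phi$, this fibre is exactly the component of $\Map_{\Algk}(R,S)$ at $\phi$, and \cite[Theorem 3.11]{DH2010} identifies the total space and the base of the sequence with $\Map_{dg-\Cat}(\RR,\SSS)$ and $\Map_{dg-\Cat}(\mathsf{k},\SSS)$ respectively.

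The third step is to translate the DG-categorical mapping spaces into the moduli description of Lemma \ref{moduli}: by To\"en's theorem \cite[Theorem 4.2]{toen} one has $\Map_{dg-\Cat}(\RR,\SSS)\simeq\mathcal{M}_{R,S}$ and $\Map_{dg-\Cat}(\mathsf{k},\SSS)\simeq\mathcal{M}_{S}$, which gives the first displayed fibre sequence in the statement once we check base points. Along the equivalence of Lemma \ref{moduli} the canonical distinguished $R-S$-bimodule $S$, with left action via $\phi$, corresponds to $\phi\in\Map_{dg-\Cat}(\RR,\SSS)$; its image in $\mathcal{M}_{S}$ is the base point $k\rightarrow S$; and the fibre over this point is $\Map_{\Algk}(R,S)$ pointed at $\phi$. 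This is precisely the content of Remark \ref{point}, so the two displayed sequences are sequences of pointed spaces with the asserted base points, and they agree under the identifications of Lemma \ref{moduli}.

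The main obstacle in this argument has in effect already been overcome in the preceding section: it is Axiom III, i.e. that the left derived functor of $F:\Chrs^{0}\rightarrow\Algrs$ preserves distinguished objects (Lemma \ref{maintheorem}), which is where the strictness hypothesis on $S$ genuinely enters, through Lemma \ref{S-eq} and Lemma \ref{lemma2}. Everything else is an application of \cite[Theorems 3.10 and 3.11]{DH2010} together with the bookkeeping of cofibrant replacements and base points, so I do not expect any further difficulty.
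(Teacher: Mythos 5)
Your proposal is correct and follows essentially the same route as the paper: verify the six Dwyer--Hess axioms (with Axiom III supplied by Lemma \ref{maintheorem}, using strictness of $S$), apply \cite[Theorem 3.10]{DH2010} to get the fibre sequence, and translate the base and total space via Lemma \ref{moduli}. Your extra remarks on cofibrant replacement of $R$ and the base-point bookkeeping of Remark \ref{point} only make explicit what the paper leaves implicit.
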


\begin{proof}
Since the categories $\Chk,~\Algk$ and $\Chrs$ verify the six Axioms (\ref{axiom6}) for any cofibrant DG $k$-algebra $R$ and any strict DG $k$-algebra $S$. Hence, there is a fiber sequence $\Map_{\Algk}(R,S)\rightarrow \mathcal{M}_{R,S}\rightarrow\mathcal{M}_{S}$ by \cite[Theorem 3.10]{DH2010}. For the second fiber sequence, we apply Lemma \ref{moduli}. 
\end{proof}
\begin{remark}
Notice that the homotopy limits in $\Algk$ and $dg-\Cat^{\ast} $ (where $dg-\Cat^{\ast}$ is the full subcategory of $dg-\Cat$ such that all categories have only one object) are the same and the fact that the mapping space commutes with homotopy limits, we have a  more general result: 
$$ \Map_{\Algk}(R,S)\rightarrow\Map_{dg-\Cat}(\RR,\SSS)\rightarrow\Map_{dg-\Cat}(\mathsf{k},\SSS).$$
is a fiber sequence for any cofibrant $R$ and any DG $k$-algebra $S$ which is homotopy limit of strcit DG $k$-algebras. Unfortunately, we don't know if any DG $k$-algebra is a homotopy limit of strict DG $k$-algebras.  
\end{remark}
\begin{theorem}\label{th2}
 Let $R$ be a strict  DG $k$-algebra, then there is an exact sequence of groups
$$\dots\rightarrow\coh^{-1} (R)\rightarrow  [R,R]_{1}^{\otimes}\rightarrow \HH^{0}(R)^{\star}\rightarrow \coh^{0}(R)^{\star}.$$
where $\HH^{0}(R)^{\star}$  is the group of units of $\HH^{0}(R)$ and  $\coh^{0}(R)^{\star}$ is the group of the units of the ring  $ \coh^{0}(R)$. 
 \end{theorem}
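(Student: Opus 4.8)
**The plan is to derive Theorem \ref{th2} from the fiber sequence of Theorem \ref{Hess2} by applying the long exact sequence of homotopy groups, using the explicit identifications of the two right-hand moduli spaces.** Take $R = S$ and $\phi = \mathrm{id}$. The fiber sequence
$$\Map_{\Algk}(R,R)_{\mathrm{id}} \rightarrow \mathcal{M}_{R,R} \rightarrow \mathcal{M}_{R}$$
yields, by \cite[Theorem 3.10]{DH2010} (or the classical long exact sequence of a fibration once we check $\pi_0$-connectivity issues at the relevant base point), a long exact sequence
$$\dots \rightarrow \pi_2\mathcal{M}_{R,R} \rightarrow \pi_2\mathcal{M}_R \rightarrow \pi_1\Map_{\Algk}(R,R)_{\mathrm{id}} \rightarrow \pi_1\mathcal{M}_{R,R} \rightarrow \pi_1\mathcal{M}_R,$$
i.e. $\dots \to \pi_2\mathcal{M}_{R,R} \to \pi_2\mathcal{M}_R \to [R,R]_1^{\otimes} \to \pi_1\mathcal{M}_{R,R} \to \pi_1\mathcal{M}_R$. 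So the whole proof reduces to identifying the four groups $\pi_1\mathcal{M}_{R,R}$, $\pi_1\mathcal{M}_R$, and (to get the tail) $\pi_2\mathcal{M}_{R,R} \to \pi_2\mathcal{M}_R$, with $\HH^0(R)^{\star}$, $\coh^0(R)^{\star}$, and a map whose fiber term is $\coh^{-1}(R)$.

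**The heart of the matter is the computation of the fundamental groups of the moduli spaces $\mathcal{M}_{R,S}$ and $\mathcal{M}_S$.** By Lemma \ref{moduli}, $\mathcal{M}_{R,R}$ is the nerve of the category of potentially distinguished $R$-bimodules (those $R$-bimodules zig-zag equivalent to $R$ as a right $R$-module), pointed at $R$ itself; similarly $\mathcal{M}_R$ is the nerve of the category of DG $R$-modules equivalent to $R$. A standard fact about nerves of categories in which all morphisms are weak equivalences (Dwyer--Kan / To\"en) is that $\pi_1$ of such a classification space at an object $X$ is the group of homotopy self-equivalences of $X$ in the relevant homotopy category. Thus $\pi_1\mathcal{M}_R \simeq \mathrm{Aut}_{\mathsf{D}_R}(R) = (\coh^0 R)^{\star}$, using Remark \ref{remarque2}: $\mathsf{D}_R(R,R) = \coh^0 R$ as a ring, and the invertible elements of this ring are exactly the self-homotopy-equivalences of $R$. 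Likewise $\pi_1\mathcal{M}_{R,R} \simeq \mathrm{Aut}_{\mathsf{D}_{R\otimes R^{op}}}(R)$ restricted to those bimodule maps which are underlying equivalences — but since a bimodule endomorphism of $R$ that is a quasi-isomorphism is automatically invertible in $\mathsf{D}_{R\otimes R^{op}}$, this is the group of units of $\mathsf{D}_{R\otimes R^{op}}(R,R) = \HH^0(R)$ (Definition \ref{Hochschild3} with $n=0$), i.e. $\HH^0(R)^{\star}$. The map $\pi_1\mathcal{M}_{R,R}\to\pi_1\mathcal{M}_R$ is then the forgetful ring-unit map $\HH^0(R)^{\star}\to\coh^0(R)^{\star}$ induced by restriction of scalars along the obvious map $\HH^0(R)\to\coh^0(R)$ (send a bimodule self-map to its underlying right-module self-map, equivalently the canonical map $\Ext_{R\otimes R^{op}}(R,R)\to\Ext_R(R,R)$).

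**For the tail of the sequence I would identify the higher homotopy groups with the ones already computed in Section 2.** The fiber of $\mathcal{M}_{R,R}\to\mathcal{M}_R$ over the basepoint, at the level of higher homotopy, contributes the terms $\coh^{-n}(R)$: indeed by Remark \ref{remarque2} the higher homotopy of the mapping-space fibers involving $R$ as a right $R$-module is $\coh^{-n}(R)$, and by the relation between $\mathcal{M}$-style classification spaces and mapping spaces (the loop space of $\mathcal{M}_R$ at $R$ is $\Map_{\Chr}(R,R)^{\times}$, the union of invertible components), one gets $\pi_n\mathcal{M}_R \simeq \pi_{n-1}\Map_{\Chr}(R,R) \simeq \coh^{-(n-1)}(R)$ for $n\geq 2$, and similarly $\pi_n\mathcal{M}_{R,R}\simeq\HH^{-(n-1)}(R)$ for $n\geq 2$. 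Splicing these identifications into the long exact sequence and shifting indices gives the asserted exact sequence $\dots\to\coh^{-1}(R)\to[R,R]_1^{\otimes}\to\HH^0(R)^{\star}\to\coh^0(R)^{\star}$, where the term just to the left of $[R,R]_1^{\otimes}$ is $\pi_2\mathcal{M}_R\simeq\coh^{-1}(R)$ and the map $\pi_2\mathcal{M}_{R,R}\to\pi_2\mathcal{M}_R$ is $\HH^{-1}(R)\to\coh^{-1}(R)$. **I expect the main obstacle to be the careful treatment of base points and path components** — the moduli spaces are not connected, so one must restrict throughout to the components containing $R$ (equivalently, work with the invertible-components subspaces), verify that the fiber sequence of Theorem \ref{Hess2} stays a fiber sequence after this restriction, and check that "homotopy self-equivalence in the derived category" is genuinely the same as "unit of the endomorphism ring in cohomological degree $0$"; the strictness hypothesis on $R$ (Definition \ref{lemma1bis}) is exactly what makes Lemma \ref{inverse} available, so that weak self-equivalences of $R$ in $\Chr$ are honest isomorphisms and the identification of $\pi_1$ with a group of units is clean rather than merely up-to-homotopy.
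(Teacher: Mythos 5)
Your proposal is correct and follows essentially the same route as the paper: apply the long exact sequence of homotopy groups to the fiber sequence of Theorem \ref{Hess2} with $S=R$, $\phi=\mathrm{id}$, and identify $\pi_{1}\Map_{dg-\Cat}(\RR,\RR)_{id}\simeq \HH^{0}(R)^{\star}$, $\pi_{1}\Map_{dg-\Cat}(\mathsf{k},\RR)_{\nu}\simeq \coh^{0}(R)^{\star}$ and $\pi_{i+1}\Map_{dg-\Cat}(\mathsf{k},\RR)_{\nu}\simeq \coh^{-i}(R)$ for $i>0$. The only difference is cosmetic: where you sketch these identifications via the Dwyer--Kan classification-space description of the moduli spaces, the paper simply cites To\"en's Corollaries 8.3 and 4.10, which encode exactly that argument.
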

 \begin{proof}
By Theorem \ref{Hess2}, we have the fiber sequence
 \begin{displaymath}\label{a}
 \Map_{\Algk}(R,R)_{id}\rightarrow  \Map_{dg-\Cat}(\RR,\RR)_{id}\rightarrow\Map_{dg-\Cat}(\mathsf{k},\RR)_{\nu }.
 \end{displaymath}
Therefore, applying the long exact sequence of homotopy groups (starting at level one), we obtain  
 $$ \pi_{1}\Map_{dg-\Cat}(\mathsf{k},\RR)_{\nu}\leftarrow   \pi_{1}\Map_{dg-\Cat}(\RR,\RR)_{id}\leftarrow [R,R]_{1}^{\otimes}\leftarrow \pi_{2}\Map_{dg-\Cat}(\mathsf{k},\RR)_{id}\dots$$ 
By \cite[Corollary 8.3]{toen}, we have that $$\pi_{1}\Map_{dg-\Cat}(\RR,\RR)_{id}\simeq \HH^{0}(R)^{\star}.$$
By \cite[corollary 4.10]{toen}, we have that for $i>0$.
$$\pi_{1}\Map_{dg-\Cat}(k,\RR)_{\nu}\simeq \coh^{0}(R)^{\star}~~ \mathrm{and}~~ \pi_{i+1}\Map_{dg-\Cat}(k,\RR)_{\nu}\simeq \coh^{-i}(R).$$ 
  \end{proof}
\begin{corollary}
 Let $R$ be a DG algebra such that $\coh^{-1}(R)=0$, then 
 $$\pi_{1}\Map_{\Algk}(R,R)_{id}\simeq Ker[\HH^{0}(R)^{\star}\rightarrow \coh^{0}(R)^{\star}].$$
  \end{corollary}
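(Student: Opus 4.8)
The plan is to extract this corollary directly from the long exact sequence in homotopy groups associated to the fiber sequence of Theorem~\ref{Hess2}, specialized to $R = S$ and base point $\mathrm{id}$. First I would write down the relevant tail of the long exact sequence of the fibration
$$\Map_{\Algk}(R,R)_{\mathrm{id}}\rightarrow \Map_{dg-\Cat}(\RR,\RR)_{\mathrm{id}}\rightarrow \Map_{dg-\Cat}(\mathsf{k},\RR)_{\nu},$$
namely
$$\pi_{2}\Map_{dg-\Cat}(\mathsf{k},\RR)_{\nu}\rightarrow \pi_{1}\Map_{\Algk}(R,R)_{\mathrm{id}}\rightarrow \pi_{1}\Map_{dg-\Cat}(\RR,\RR)_{\mathrm{id}}\rightarrow \pi_{1}\Map_{dg-\Cat}(\mathsf{k},\RR)_{\nu}.$$
By To\"en's computations quoted in the proof of Theorem~\ref{th2}, the middle term is $\HH^{0}(R)^{\star}$, the last term is $\coh^{0}(R)^{\star}$, and $\pi_{2}\Map_{dg-\Cat}(\mathsf{k},\RR)_{\nu}\simeq \coh^{-1}(R)$.

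The second step is to impose the hypothesis $\coh^{-1}(R)=0$. This forces $\pi_{2}\Map_{dg-\Cat}(\mathsf{k},\RR)_{\nu}=0$, so the map $\pi_{1}\Map_{\Algk}(R,R)_{\mathrm{id}}\rightarrow \HH^{0}(R)^{\star}$ becomes injective. Since this map fits into an exact sequence whose next arrow is $\HH^{0}(R)^{\star}\rightarrow \coh^{0}(R)^{\star}$, exactness at $\HH^{0}(R)^{\star}$ identifies the image of $\pi_{1}\Map_{\Algk}(R,R)_{\mathrm{id}}$ with the kernel of $\HH^{0}(R)^{\star}\rightarrow \coh^{0}(R)^{\star}$. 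Combining injectivity with this identification of the image yields the claimed isomorphism
$$\pi_{1}\Map_{\Algk}(R,R)_{\mathrm{id}}\simeq \mathrm{Ker}\big[\HH^{0}(R)^{\star}\rightarrow \coh^{0}(R)^{\star}\big].$$

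A small point worth addressing is that Theorem~\ref{Hess2} as stated requires $R$ to be a strict DG $k$-algebra (and cofibrant), whereas the corollary only asks for $\coh^{-1}(R)=0$; I would either state the corollary under the same standing hypotheses as Theorem~\ref{th2} (so that $R$ is strict), or remark that one may replace $R$ by a strict cofibrant model without changing $\coh^{\ast}(R)$, $\HH^{\ast}(R)$, or the homotopy type of the mapping spaces, since all the invariants in sight are homotopy-invariant. I do not expect any genuine obstacle here: the argument is a formal consequence of the long exact sequence together with the already-established identifications, and the only thing to be careful about is keeping track of which maps in the sequence are group homomorphisms (they all are in the range of degrees involved, as these homotopy ``groups'' of $\Map_{\Algk}$ and of the moduli spaces are genuine groups for $n\geq 1$) so that ``$\mathrm{Ker}$'' and ``isomorphism'' make literal sense.
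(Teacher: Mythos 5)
Your argument is correct and is essentially the paper's: the corollary is exactly the tail of the exact sequence of Theorem \ref{th2} (itself obtained from the fiber sequence of Theorem \ref{Hess2} plus To\"en's identifications), with $\coh^{-1}(R)=0$ killing the term to the left so that $[R,R]_{1}^{\otimes}$ injects onto the kernel of $\HH^{0}(R)^{\star}\rightarrow \coh^{0}(R)^{\star}$. Your remark about the strictness/cofibrancy hypotheses is a fair observation, since the paper's corollary tacitly carries the standing hypotheses of Theorem \ref{th2}.
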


\subsection{Higher Homotopy Groups of $\Map_{\Algk}(R,S)_{\phi}$} \label{higher} 
In this section, we give a complete description of the higher homotopy groups of the mapping space of the model category $\Algk$.  
\begin{theorem}\label{relation}
Let $\nu: k\rightarrow S$ be the unit, and $\phi: R\rightarrow S$ a map of $\Algk$ and $S$ is a strict DG $k$-algebra. There is a fiber sequence of spaces:
$$\Omega\Map_{\Algk}(R,S)_{\phi}\rightarrow \Map_{\Chrr}(R,S_{\phi})\rightarrow\Map_{\Chk}(k,S),$$
where $S_{\phi}$ is seen as $R$-bimodule via ${\phi}$.
\end{theorem}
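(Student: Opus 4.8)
The plan is to derive this fiber sequence from the already-established master fiber sequence of Theorem~\ref{Hess2} by looping once. Recall that by Theorem~\ref{Hess2} we have the fiber sequence
$$\Map_{\Algk}(R,S)_{\phi}\rightarrow\Map_{dg-\Cat}(\RR,\SSS)_{\phi}\rightarrow\Map_{dg-\Cat}(\mathsf{k},\SSS)_{\nu}.$$
Since the fiber sequence property is preserved by applying the loop functor $\Omega$ (based at the relevant points), we obtain a fiber sequence
$$\Omega\Map_{\Algk}(R,S)_{\phi}\rightarrow\Omega\Map_{dg-\Cat}(\RR,\SSS)_{\phi}\rightarrow\Omega\Map_{dg-\Cat}(\mathsf{k},\SSS)_{\nu}.$$
So the task reduces to identifying the two loop spaces on the right with $\Map_{\Chrr}(R,S_{\phi})$ and $\Map_{\Chk}(k,S)$ respectively, compatibly with the map between them.

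For the second identification, I would invoke To\"en's description of the mapping space $\Map_{dg-\Cat}(\mathsf{k},\SSS)$ as the moduli space $\mathcal{M}_{S}$ of potentially distinguished right $S$-modules (Lemma~\ref{moduli}), pointed at $S$ (Remark~\ref{point}). The loop space of this moduli space at the object $S$ is computed by the derived automorphisms, and more precisely the homotopy fiber of $\Map_{dg-\Cat}(\mathsf{k},\SSS)$ over its basepoint component can be identified with $\Map_{\Chs}(S,S)_{id}$, which under $S$-linearity is $\Map_{\Chk}(k,S)$ (cf.\ Remark~\ref{remarque2} and \cite[Corollary 4.10]{toen}); I would make this precise using the standard fact that for a moduli space presented as the nerve of a category of weak equivalences, the loop space at an object $x$ is the derived space of self-equivalences $\Map^{\simeq}(x,x)$, which in the linear setting is the invertible part of the (connected, via the strictness hypothesis on $S$) mapping space $\Map_{\Chs}(S,S)$. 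The same argument applied to $\mathcal{M}_{R,S}\simeq\Map_{dg-\Cat}(\RR,\SSS)$ identifies $\Omega\Map_{dg-\Cat}(\RR,\SSS)_{\phi}$ with the derived self-equivalences of the distinguished $R$-$S$-bimodule $S_{\phi}$, i.e.\ with (the invertible part of, hence up to components) $\Map_{\Chrr}(R,S_{\phi})$ --- here one uses that a cofibrant $R$-$S$-bimodule equivalent to $S$ has endomorphism space computed by $\mathbf{R}\mathrm{Hom}_{R\otimes R^{op}}(R,S)$ after restricting along $\phi$, together with the strictness of $S$ to see that the relevant mapping spaces are connected so that "invertible part" does not change the homotopy type. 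The compatibility of the two identifications with the forgetful map $\mathcal{M}_{R,S}\rightarrow\mathcal{M}_{S}$ (restrict the bimodule structure to a right module structure) is then a matter of unwinding To\"en's functoriality.

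The main obstacle I anticipate is the bookkeeping in the loop-space identifications: one must be careful that $\Omega$ of the nerve of a category of equivalences, based at an object, really is the derived endomorphism \emph{space} of that object rather than merely its $\pi_0$ of units, and that passing from "homotopy self-equivalences of $S_\phi$" to "$\Map_{\Chrr}(R,S_\phi)$" does not lose or add path components --- this is exactly where the hypothesis that $S$ is strict (Definition~\ref{lemma1bis}), via Lemma~\ref{inverse} and Lemma~\ref{S-eq}, is needed, since it forces homotopy-invertible degree-zero elements to be honestly invertible and the ambient mapping spaces to be connected enough for the identification to hold on the nose. Once the two endpoints are matched, the fiber-sequence statement is immediate from functoriality of $\Omega$ applied to Theorem~\ref{Hess2}.
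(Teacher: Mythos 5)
Your overall strategy --- loop the fiber sequence of Theorem \ref{Hess2} and identify the looped moduli spaces --- is a reasonable attempt to reconstruct what the paper simply cites as a black box: the paper's proof is a one-line appeal to \cite[Theorem 3.11]{DH2010}, which is applicable because the six axioms were verified for $\Chk$, $\Chrs$ and $\Algk$. However, your key identification step has a genuine error. By the Dwyer--Kan classification (or To\"en's \cite[Corollary 4.10]{toen}, exactly as used in the proof of Theorem \ref{th2}), the loop space of the moduli space $\mathcal{M}_{S}\simeq\Map_{dg-\Cat}(\mathsf{k},\SSS)$ based at $S$ is the space of homotopy \emph{self-equivalences} of $S$ in $\Chs$, whose $\pi_{0}$ is $\coh^{0}(S)^{\star}$; it is \emph{not} the full mapping space $\Map_{\Chk}(k,S)$, whose $\pi_{0}$ is all of $\coh^{0}(S)$. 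The same discrepancy occurs on the bimodule side, where $\pi_{0}$ of the loop space is the group of units of $\HH^{0}(R,S)$ rather than $\HH^{0}(R,S)$. Your proposed repair --- that strictness of $S$ forces the relevant mapping spaces to be ``connected enough'' so that passing to the invertible part changes nothing ``on the nose'' --- is false: strictness (Definition \ref{lemma1bis}) only says $d_{-1}=0$ and is used in the paper to verify Axiom III (Lemmas \ref{S-eq}, \ref{lemma2}); it has no bearing on $\pi_{0}\Map_{\Chk}(k,S)=\coh^{0}(S)$, which for instance equals $k[x]$ when $S=k[x]$ sits in degree $0$, while the units form a strictly smaller set.

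The conclusion of the theorem survives, but for a different reason, which you would need to supply. A fiber sequence only records the homotopy fiber over the chosen basepoint, so it suffices to show that replacing the full mapping spaces by the self-equivalence spaces does not change the homotopy fiber over the unit component: the basepoint component of $\Map_{\Chs}(S,S)\simeq\Map_{\Chk}(k,S)$ at $\mathrm{id}\leftrightarrow\nu$ is the same in both models, and any component of $\Map_{\Chrs}(S_{\phi},S_{\phi})\simeq\Map_{\Chrr}(R,S_{\phi})$ lying over the identity component consists automatically of weak equivalences, since weak equivalences of bimodules are detected on underlying complexes and a map underlying-homotopic to the identity is a quasi-isomorphism. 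With that argument made explicit (together with the derived adjunction identifying $\Map_{\Chrs}(S_{\phi},S_{\phi})$ with $\Map_{\Chrr}(R,S_{\phi})$, which you do invoke correctly), your route becomes a valid proof; alternatively, and as the paper does, one can simply quote \cite[Theorem 3.11]{DH2010}, whose proof encapsulates exactly this bookkeeping.
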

\begin{proof}
It is a consequence of \ref{Hess2} and  \cite[Theorem 3.11]{DH2010}.
\end{proof}
\begin{theorem}\label{th1}
For any map of DG algebras $\phi: R\rightarrow S$ such that $S$ is a strict DG $k$-algebra, there is a long exact sequence of abelian groups  
$$ \coh^{-1}(S)\leftarrow \HH^{-1}(R,S)\leftarrow [R,S]^{\otimes}_{2}\leftarrow \coh^{-2}(S)\leftarrow \HH^{-2}(R,S)\leftarrow [R,S]^{\otimes}_{3}\leftarrow \dots  $$  
where $S$ is seen as an $R$-bimodule via $\phi$.
\end{theorem}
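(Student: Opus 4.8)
The plan is to derive the long exact sequence of Theorem~\ref{th1} by running the long exact sequence of homotopy groups associated to the fiber sequence of Theorem~\ref{relation}, namely
$$\Omega\Map_{\Algk}(R,S)_{\phi}\rightarrow \Map_{\Chrr}(R,S_{\phi})\rightarrow\Map_{\Chk}(k,S).$$
First I would take homotopy groups of this fiber sequence, based at the canonical null morphisms, to obtain for every $n\geq 1$ the exact sequence
$$\cdots\rightarrow\pi_{n}\Map_{\Chk}(k,S)\rightarrow\pi_{n-1}\Omega\Map_{\Algk}(R,S)_{\phi}\rightarrow\pi_{n-1}\Map_{\Chrr}(R,S_{\phi})\rightarrow\pi_{n-1}\Map_{\Chk}(k,S)\rightarrow\cdots$$
and then use the obvious identification $\pi_{n-1}\Omega\Map_{\Algk}(R,S)_{\phi}\simeq\pi_{n}\Map_{\Algk}(R,S)_{\phi}=[R,S]^{\otimes}_{n}$.

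Next I would translate the two outer terms into the invariants appearing in the statement. By Definition~\ref{Hochschild3} together with Remark~\ref{remarque2}, one has $\pi_{m}\Map_{\Chrr}(R,S_{\phi})\simeq\HH^{-m}(R,S)$ for $m\geq 0$ (here $S$ carries the $R$-bimodule structure induced by $\phi$), and by Remark~\ref{remarque2} applied with $N=S$ viewed as a DG $k$-module one has $\pi_{m}\Map_{\Chk}(k,S)\simeq\coh^{-m}(S)$ for $m\geq 0$. Substituting these identifications into the long exact sequence above, and reindexing so that the term $[R,S]^{\otimes}_{n}$ sits between $\coh^{-n}(S)$ and $\HH^{-n+1}(R,S)$, produces exactly
$$\coh^{-1}(S)\leftarrow \HH^{-1}(R,S)\leftarrow [R,S]^{\otimes}_{2}\leftarrow \coh^{-2}(S)\leftarrow \HH^{-2}(R,S)\leftarrow [R,S]^{\otimes}_{3}\leftarrow\cdots$$
All groups involved are abelian: $[R,S]^{\otimes}_{n}=\pi_{n}\Map_{\Algk}(R,S)_{\phi}$ for $n\geq 2$ is abelian as a higher homotopy group, and the $\coh$ and $\HH$ terms are abelian by construction, so the sequence is one of abelian groups as claimed.

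The main obstacle is making sure the identifications of the outer terms are compatible with the maps in the sequence, i.e.\ that the map $\Map_{\Chrr}(R,S_{\phi})\rightarrow\Map_{\Chk}(k,S)$ in Theorem~\ref{relation} is, on homotopy groups, the natural restriction map $\HH^{-m}(R,S)\rightarrow\coh^{-m}(S)$ induced by the unit $k\rightarrow R\otimes R^{op}$ (equivalently, by restriction of bimodules to the underlying DG $k$-module); this needs a small check tracing through the proof of \cite[Theorem 3.11]{DH2010} and the adjunction $(F,U)$ of Axiom~II, but it is forced by naturality of the constructions and the fact that the fiber sequence arises by looping the fiber sequence of Theorem~\ref{Hess2} together with the mapping-space descriptions of \cite{toen}. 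A secondary point requiring care is the lower end of the sequence: the identifications $\pi_{0}\Map_{\Chrr}(R,S_{\phi})\simeq\HH^{0}(R,S)$ and $\pi_{0}\Map_{\Chk}(k,S)\simeq\coh^{0}(S)$ only give pointed sets a priori, but since the sequence is being asserted only from the term $\coh^{-1}(S)$ leftward (i.e.\ truncated before $\pi_0$), exactness at $\coh^{-1}(S)$ and beyond follows from the long exact sequence of the fibration in the stable range where all terms are genuine abelian groups.
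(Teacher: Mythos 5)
Your proposal is correct and follows essentially the same route as the paper: both run the long exact sequence of homotopy groups on the fiber sequence of Theorem~\ref{relation}, identify $\pi_{m}\Map_{\Chrr}(R,S_{\phi})$ with $\HH^{-m}(R,S)$ and $\pi_{m}\Map_{\Chk}(k,S)$ with $\coh^{-m}(S)$ via Remarks~\ref{remarque1}--\ref{remarque2} and Definition~\ref{Hochschild3}, and truncate where all terms are genuine abelian groups. The only cosmetic difference is that the paper first loops the fiber sequence once more (so that every term is a priori a group, invoking stability of $\Chrr$ and $\Chk$) before applying the exact sequence, whereas you truncate the untwisted sequence at $\coh^{-1}(S)$, which amounts to the same thing.
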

\begin{proof}
Let $\nu :k\rightarrow S$ be the unit morphism. We loop the previous fiber sequence \ref{relation} and obtain a new fiber sequence
\begin{equation}\label{eq}
\Omega^{2}\Map_{\Algk}(R,S)_{\phi}\rightarrow \Omega\Map_{\Chrr}(R,S_{\phi})\rightarrow\Omega\Map_{\Chk}(k,S).
\end{equation}
Since the model categories $\Chrr$ and $\Chk$ are stable and in particular pointed, the map 
$$\Omega\Map_{\Chrr}(R,S_{\phi})\rightarrow\Omega\Map_{\Chk}(k,S)$$
 induces a morphism of abelian groups
  $$\pi_{i} \Omega\Map_{\Chrr}(R,S_{\phi})\rightarrow\pi_{i}\Omega\Map_{\Chk}(k,S)~\textrm{for}~i\geq 0.$$ 
  By \ref{remarque1} and Definition \ref{Hochschild3}, there is an isomorphism of groups 
  $$\pi_{i}\Omega\Map_{\Chrr}(R,S_{\phi})\simeq \HH^{-1-i}(R,S)~\textrm{for}~i\geq 0,$$
   and by \ref{remarque2} $\pi_{i}\Omega\Map_{\Chk}(k,S)\simeq \coh^{-i-1}(S)$. Therefore, applying the Serre exact sequence to \ref{eq} we prove our theorem.


\end{proof}

\begin{corollary}\label{cor0}
If $\phi:R\rightarrow S$ a morphism of DG $k$-algebras such that $S$ is connective, then 
$$\HH^{-i}(R,S)\simeq  [R,S]^{\otimes}_{i+1}~\textrm{for all}~ i>0.$$
\end{corollary}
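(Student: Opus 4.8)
The plan is to derive the isomorphism directly from the long exact sequence established in Theorem \ref{th1} together with the connectivity hypothesis on $S$. Recall that when $S$ is connective, i.e. $\coh^{n}(S)=0$ for all $n>0$, the remark \ref{remarque2} gives $\pi_{i}\Map_{\Chk}(k,S)\simeq \coh^{-i}(S)$, which for $i>0$ may be nonzero, so connectivity alone does not kill the terms $\coh^{-i}(S)$. Instead the observation is that the map $\Map_{\Chrr}(R,S_{\phi})\rightarrow\Map_{\Chk}(k,S)$ in Theorem \ref{relation} is, on homotopy groups, the map $\HH^{-i}(R,S)\simeq\pi_{0}\Map_{\Chrr}(R,\Sigma^{-i}S)\rightarrow\pi_{0}\Map_{\Chk}(k,\Sigma^{-i}S)\simeq\coh^{-i}(S)$ induced by restriction along the unit $k\rightarrow R\otimes R^{op}$ (equivalently, forgetting the bimodule structure and evaluating at $1$), and when $S$ is a connective strict DG algebra this map is surjective for every $i$.

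First I would make precise the identification of the connecting maps in the long exact sequence of Theorem \ref{th1}: the sequence there is the Serre exact sequence of the fiber sequence \ref{eq}, so the map $[R,S]^{\otimes}_{i+1}=\pi_{i+1}\Map_{\Algk}(R,S)_{\phi}\rightarrow\coh^{-i}(S)$ and the map $\HH^{-i}(R,S)\rightarrow\coh^{-i}(S)$ are exactly $\pi_{i}$ of the two maps in \ref{eq}, the latter being the one described above. Second, I would show that $\HH^{-i}(R,S)\rightarrow\coh^{-i}(S)$ is surjective for all $i>0$: the unit $e\in S_{0}$ gives a distinguished element $e\in\coh^{0}(S)$, and for a connective $S$ the cocycles in negative degree... actually the cleanest argument is that the map $R\rightarrow S$ of DG algebras is a map of $R$-bimodules (via $\phi$ on both sides and multiplication), inducing $\HH^{\ast}(R,R)\rightarrow\HH^{\ast}(R,S)$, but more directly: the composite $k\rightarrow R\otimes R^{op}\rightarrow$ (forget) realizes $\coh^{-i}(S)$ as a retract — one checks that the bimodule map $R\otimes R^{op}\rightarrow S$ splits the unit after applying $\mathrm{Hom}_{R\otimes R^{op}}(R,-)$ composed with the forgetful functor, so the map $\HH^{-i}(R,S)\rightarrow\coh^{-i}(S)$ admits a section. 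Granting surjectivity, the long exact sequence breaks into short exact sequences $0\rightarrow[R,S]^{\otimes}_{i+1}\rightarrow\HH^{-i}(R,S)\rightarrow\coh^{-i}(S)\rightarrow 0$ for $i>0$; but surjectivity of $\HH^{-i}(R,S)\rightarrow\coh^{-i}(S)$ forces the preceding connecting map $\coh^{-i}(S)\rightarrow\HH^{-i}(R,S)$...

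Actually I need to be more careful about which direction gives the splitting. Re-examining: the exactness at $\HH^{-i}(R,S)$ in the sequence $\cdots\leftarrow\coh^{-i}(S)\leftarrow\HH^{-i}(R,S)\leftarrow[R,S]^{\otimes}_{i+1}\leftarrow\coh^{-i-1}(S)\leftarrow\cdots$ shows that $[R,S]^{\otimes}_{i+1}\rightarrow\HH^{-i}(R,S)$ is injective precisely when $\coh^{-i-1}(S)\rightarrow[R,S]^{\otimes}_{i+1}$ is zero, and surjective onto the kernel of $\HH^{-i}(R,S)\rightarrow\coh^{-i}(S)$; so to get the isomorphism $\HH^{-i}(R,S)\simeq[R,S]^{\otimes}_{i+1}$ I need $\HH^{-i}(R,S)\rightarrow\coh^{-i}(S)$ to be the zero map AND $\coh^{-i-1}(S)\rightarrow[R,S]^{\otimes}_{i+1}$ to be zero — equivalently, the map $\Map_{\Chrr}(R,S_{\phi})\rightarrow\Map_{\Chk}(k,S)$ is nullhomotopic on the relevant homotopy groups, or rather I should instead show $\coh^{-i}(S)=0$ for $i>0$ is false... hmm. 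The correct statement must be that $\Map_{\Chk}(k,S)\rightarrow$ the base is such that the fiber sequence degenerates: indeed since the map $R\rightarrow S$ exists, the $R$-bimodule $S$ receives the bimodule map from $R$, and one shows the map $\Map_{\Chrr}(R,S)\rightarrow\Map_{\Chk}(k,S)$ admits a section up to homotopy (given by $R\otimes R^{op}\to S$), so the fiber sequence of Theorem \ref{relation} splits, giving $\Omega\Map_{\Algk}(R,S)_{\phi}$ as the homotopy fiber of a split surjection, whence $\pi_{i}\Omega\Map_{\Algk}(R,S)_{\phi}\simeq\ker(\HH^{-1-i}\to\coh^{-1-i})$, and then connectivity of $S$ is used to identify this kernel. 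I would therefore expect the main obstacle to be pinning down exactly how connectivity of $S$ enters — whether it is needed to make the splitting map a weak equivalence onto a summand, or to ensure the higher $\coh^{-i}(S)$ terms assemble correctly — and to verify that the section is genuinely a map of the relevant structured objects; I would resolve this by working entirely at the level of the fiber sequence $\Omega\Map_{\Algk}(R,S)_{\phi}\to\Map_{\Chrr}(R,S_{\phi})\to\Map_{\Chk}(k,S)$, exhibiting the retraction $\mathrm{Hom}_{\Chk}(k,S)\to\mathrm{Hom}_{\Chrr}(R,S)$ induced by the augmentation $R\otimes R^{op}\to R\to S$ composed appropriately, checking it is a homotopy section, and reading off the long exact sequence.
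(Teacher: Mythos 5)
The gap is in your reading of the hypothesis, and it is fatal to the route you chose. In this paper ``connective'' is used with the paper's own cohomological indexing to mean $\coh^{n}(S)=0$ for all strictly negative $n$, i.e.\ exactly that the terms $\coh^{-i}(S)$, $i\geq 1$, appearing in the long exact sequence of Theorem \ref{th1} vanish; this is the reading consistent with $\pi_{i}\Map_{\Chk}(k,S)\simeq\coh^{-i}(S)$ (Remark \ref{remarque2}) and with the later corollary asserting that these homotopy groups vanish for $i>0$ when $S$ is connective. With that reading the corollary is immediate: in the sequence of Theorem \ref{th1} each map $[R,S]^{\otimes}_{i+1}\rightarrow\HH^{-i}(R,S)$, $i>0$, is flanked by $\coh^{-i-1}(S)$ and $\coh^{-i}(S)$, which are zero, hence it is an isomorphism. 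You instead took connective to mean $\coh^{n}(S)=0$ for $n>0$, concluded that the $\coh^{-i}(S)$ need not vanish, and tried to degenerate the fiber sequence of Theorem \ref{relation} by producing a homotopy section. Under your reading the statement you are trying to prove is simply false: take $R=k$ (initial, hence cofibrant, in $\Algk$), so that $\Map_{\Algk}(k,S)$ is contractible and $[k,S]^{\otimes}_{i+1}=0$, while $\HH^{-i}(k,S)\simeq\coh^{-i}(S)$ can be nonzero (e.g.\ $S=k[x]/x^{2}$ with $x$ in degree $-1$, $dx=0$, which is strict and has $\coh^{n}(S)=0$ for $n>0$). So no splitting argument can close the gap; the vanishing of the $\coh^{-i}(S)$ is exactly what the hypothesis is there to provide.

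Independently, the splitting step itself is not justified. On homotopy groups the map $\Map_{\Chrr}(R,S_{\phi})\rightarrow\Map_{\Chk}(k,S)$ is restriction along the unit, $\HH^{-i}(R,S)\rightarrow\coh^{-i}(S)$, and it is not surjective in general: a class in its image must commute, up to homotopy, with the image of $R$; already for $\phi=\mathrm{id}_{R}$ the image of $\HH^{0}(R,R)\rightarrow\coh^{0}(R)$ lies in the center of $\coh^{0}(R)$. The bimodule map $R\otimes R^{op}\rightarrow S$ you invoke induces, after applying $\Hom_{R\otimes R^{op}}(-,S)$, a map in the same direction as the restriction (precomposition with $R\otimes R^{op}\rightarrow R$), not a section of it. Finally, as you noticed yourself, even a genuine homotopy section would only identify $[R,S]^{\otimes}_{i+1}$ with $\ker\bigl(\HH^{-i}(R,S)\rightarrow\coh^{-i}(S)\bigr)$; to obtain all of $\HH^{-i}(R,S)$ you need both flanking terms $\coh^{-i}(S)$ and $\coh^{-i-1}(S)$ to vanish, which is precisely the paper's connectivity hypothesis and its one-line proof.
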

\begin{proof}
Since $S$ is connective, i.e., $\coh^{n}(S)=0$ for all strictly negative integers. According to the long exact sequence in \ref{th1}, $[R,S]^{\otimes}_{i+1}\simeq \HH^{-i}(R,S)$ for all $i>0$. 
\end{proof}


\section{Applications}
In all our application we assume that $S$ is a strict Dg $k$-algebra. 
\subsection{Infinity loop space}
The first evident consequence of \ref{relation} is the extra structure on the double loop space of $\Map_{\Algk}(R,S)$ which is summarized in the following result.
\begin{corollary}
Let $\phi:R\rightarrow S$ a map of DG algebras, such that $S$ is connective, then $\Omega^{2}_{\phi}\Map_{\Algk}(R,S)$ is an infinity loop space. 
\end{corollary}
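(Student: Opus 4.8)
The plan is to deduce this from Theorem~\ref{relation} together with the standard recognition principle for infinite loop spaces. Recall that \ref{relation} gives a fiber sequence
$$\Omega\Map_{\Algk}(R,S)_{\phi}\rightarrow \Map_{\Chrr}(R,S_{\phi})\rightarrow\Map_{\Chk}(k,S),$$
in which the base and total space are (derived) mapping spaces in the \emph{stable} model categories $\Chrr$ and $\Chk$. The key observation is that in a stable model category, for any objects $X,Y$, the mapping space $\Map(X,Y)$ is canonically an infinite loop space: one has $\Map(X,Y)\sim\Omega^{n}\Map(X,\Sigma^{-n}Y)$ for all $n\geq 0$ by \cite[Lemma 6.1.2]{Hovey} (as already used in Remark~\ref{remarque1}), so $\Map(X,Y)$ is the zeroth space of the $\Omega$-spectrum $\{\Map(X,\Sigma^{-n}Y)\}_{n\geq 0}$. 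Hence $\Map_{\Chrr}(R,S_{\phi})$ and $\Map_{\Chk}(k,S)$ are infinite loop spaces, and the map between them is a map of infinite loop spaces (it is induced levelwise by the functor $\Chrr\to\Chk$, namely $\mathbf{R}\mathrm{Hom}_{R\otimes R^{op}}$-type restriction, applied to the suspensions $\Sigma^{-n}S$).

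\textbf{Main steps.} First I would make precise that the second map in \ref{relation} is a map of infinite loop spaces, by checking that it commutes with the suspension/loop structure on each side — this is automatic because it comes from a functor between stable model categories that preserves the triangulated structure (it is a right derived functor with a left adjoint, hence exact). Second, the homotopy fiber of a map of infinite loop spaces is again an infinite loop space, and the fiber sequence extends one step to the left: thus $\Omega\Map_{\Algk}(R,S)_{\phi}$, being the fiber of a map of infinite loop spaces, is itself an infinite loop space. Third, looping once more, $\Omega^{2}\Map_{\Algk}(R,S)_{\phi}\simeq\Omega\bigl(\Omega\Map_{\Algk}(R,S)_{\phi}\bigr)$ is an infinite loop space, since the loop space of an infinite loop space is an infinite loop space. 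I would remark that the connectivity hypothesis on $S$ is what guarantees the relevant homotopy groups are abelian groups in the right range (via Corollary~\ref{cor0}), so that the identification with the homotopy of a spectrum has no low-degree pathology; alternatively one may simply say the statement already holds at the level of $\Omega^{2}$ regardless, and connectivity makes the identification with $\HH^{-i}(R,S)$ clean.

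\textbf{The main obstacle} I expect is not the infinite-loop-structure bookkeeping but verifying that the map $\Map_{\Chrr}(R,S_{\phi})\to\Map_{\Chk}(k,S)$ genuinely respects the infinite loop (spectrum) structures on nose, rather than merely being an $H$-map; this requires articulating the functoriality of the $\Omega$-spectra $\{\Map(X,\Sigma^{-n}Y)\}$ in both variables and checking the compatibility of the connecting maps in the fiber sequence \ref{relation} with the bonding maps. A clean way around this is to observe that the entire fiber sequence of \ref{relation} deloops: one can replace it by a fiber sequence of spectra whose zeroth spaces recover it, using that \ref{relation} was itself obtained by looping the Dwyer–Hess fiber sequence \ref{Hess2}, and iterating. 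Once phrased spectrally, the result is the tautology that the fiber of a map of spectra is a spectrum, and $\Omega^{2}\Map_{\Algk}(R,S)_{\phi}$ is the zeroth space of $\Omega^{2}$ of that fiber spectrum, hence an infinite loop space.
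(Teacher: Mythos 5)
Your overall strategy (the fiber of a map of infinite loop spaces is an infinite loop space, then loop once more) is legitimate in principle, but it is not the paper's argument, and it leaves its one crucial step unproven. You need the map $\Map_{\Chrr}(R,S_{\phi})\rightarrow\Map_{\Chk}(k,S)$ from Theorem \ref{relation} to be (equivalent to) the zeroth-space map of a map of spectra, and your proposed justification does not deliver this. The suggestion to deloop the whole fiber sequence ``using that \ref{relation} was itself obtained by looping \ref{Hess2}, and iterating'' is backwards: looping \ref{Hess2} and invoking the identifications of \cite[Theorem 3.11]{DH2010} is how \ref{relation} is produced, but \ref{Hess2} itself does not deloop --- its base and total space are mapping spaces in $dg$-$\Cat$, and for instance $\pi_{1}\Map_{dg-\Cat}(\RR,\RR)_{id}\simeq\HH^{0}(R)^{\star}$ is a unit group that need not even be abelian, so these spaces are not infinite loop spaces in general --- and iterating $\Omega$ only produces further loopings, never the delooping of the map $\Map_{\Chrr}(R,S_{\phi})\rightarrow\Map_{\Chk}(k,S)$ that you actually need. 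Establishing directly that this map is induced by a map in the derived/stable category (hence an infinite loop map, with the fiber sequence of \ref{relation} compatible with the spectrum structures) would require unwinding the construction in \cite{DH2010}, which you do not do; as written this step is a genuine gap. Your remark that the connectivity of $S$ is inessential is the symptom: in your approach the statement without connectivity stands or falls with exactly this unverified delooping.

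The paper's proof sidesteps all of this by using the connectivity hypothesis head-on: since $S$ is connective, $\pi_{i}\Map_{\Chk}(k,S)\simeq\coh^{-i}(S)=0$ for $i>0$ (Remark \ref{remarque2}), so after looping the fiber sequence of Theorem \ref{relation} once the base becomes weakly contractible and the fiber inclusion gives $\Omega^{2}_{\phi}\Map_{\Algk}(R,S)\sim\Omega\Map_{\Chrr}(R,S_{\phi})$. The right-hand side is an infinite loop space simply because $\Chrr$ is a stable model category (your own observation that $\Map(X,Y)\sim\Omega^{n}\Map(X,\Sigma^{-n}Y)$), and no compatibility of the map in \ref{relation} with any spectrum structure is needed. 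That is both shorter and the reason the connectivity hypothesis appears in the statement; if you want to pursue your stronger, hypothesis-free claim, you must supply the spectrum-level enhancement of \ref{relation} as a separate argument.
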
 
\begin{proof}
Since $\pi_{i}\Map_{\Chk}(k,S)$ vanishes for $i>0$ and $\Map_{\Chrr}(R,S_{\phi})$ is an infinity loop space because $\Chrr$ is a stable model category. Hence, by Theorem \ref{relation}, we conclude that
$$\Omega\Map_{\Chrr}(R,S_{\phi})\sim\Omega^{2}_{\phi}\Map_{\Algk}(R,S)$$
 is an infinity loop space. 
\end{proof}
\subsection{Derivations}
We make a connection with the theory of derivations of DG $k$-algebras. Let $R$ be in $\Algk$ and $M$ a DG $R$-bimodule. We define a new DG $R$-algebra $R\oplus M$, called a \textit{square zero extension} as follows. It is the DG algebras whose underlying complex is $R\oplus M$ and whose DG algebra structure is the obvious one 
induced from the trivial multiplication on $M$, i.e., $m.m^{'} = 0$ for any $m, m^{'}\in M.$ The map $\phi: R\rightarrow R\oplus M$ is the obvious map of DG $R$-algebras. 
In \cite{dugger2007topological}, the authors use the inverse gradation, i.e., \textbf{the differentials are of degree -1}. According to the long exact sequence described in \cite[3.14]{dugger2007topological} and their notations, if $M$ is coconnective then,
\begin{equation}\label{derr}
 \Derive^{-n}_{k}(R,M)\simeq \HH^{-n+1}(R,M)~\mathrm{for}~n>1. 
\end{equation}
\textbf{But with our gradation and notation} if $M$ is connective then $ \Derive^{-n}(R,M)\simeq \HH^{-n+1}(R,M)$ for all $n>1$, and we have the following lemma
\begin{lemma}\label{derivative}
If $R$ is a connective DG $k$-algebra, $R\oplus M$ is a connective square-zero extension graded differential $R$-bimodule, and $\phi:R\rightarrow R\oplus M$ the obvious inclusion, then  for all $n>1$
$$\Derive^{-n}_{k}(R,M)\oplus \HH^{-n+1}(R,R)\simeq \pi_{n}\Map_{\Algk}(R,R\oplus M)_{\phi}.$$
\end{lemma}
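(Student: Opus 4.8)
The plan is to chain together Corollary \ref{cor0}, the additivity of Hochschild cohomology in the coefficient bimodule, and the comparison \eqref{derr} between $k$-linear derivations and Hochschild cohomology. First, note that since $R$ is connective we have $R_{-1}=0=M_{-1}$, so $(R\oplus M)_{-1}=0$ and $R\oplus M$ is automatically strict in the sense of \ref{lemma1bis}; moreover it is connective by hypothesis. Hence Corollary \ref{cor0} applies to $\phi: R\rightarrow R\oplus M$ and gives, for every $i>0$,
$$\pi_{i+1}\Map_{\Algk}(R,R\oplus M)_{\phi}=[R,R\oplus M]^{\otimes}_{i+1}\simeq \HH^{-i}(R,R\oplus M),$$
where $R\oplus M$ is viewed as an $R$-bimodule via $\phi$. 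Writing $n=i+1$, this covers exactly the range $n>1$ of the statement.

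Second, I would observe that as an $R$-bimodule the square-zero extension splits as the direct sum of the $R$-bimodules $R$ and $M$ (the multiplication being block-diagonal with the $M\cdot M$ block zero), and that $R\rightarrow R\oplus M\rightarrow R$ are bimodule maps splitting the inclusion. Since $\HH^{\ast}(R,-)=\Ext^{\ast}_{R\otimes R^{op}}(R,-)$ is an additive functor on $\Ho(\Chrr)$, naturality yields
$$\HH^{-n+1}(R,R\oplus M)\simeq \HH^{-n+1}(R,R)\oplus \HH^{-n+1}(R,M),$$
compatibly with the identification of the previous step. Third, I would invoke \eqref{derr}: with our cohomological grading, a connective coefficient bimodule $M$ satisfies $\Derive^{-n}_{k}(R,M)\simeq \HH^{-n+1}(R,M)$ for all $n>1$, since in the long exact sequence of \cite[3.14]{dugger2007topological} the connecting maps in this range involve $\coh^{\ast}$ of $M$ in strictly negative degrees, where $M$ vanishes. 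Combining the three displayed isomorphisms gives
$$\pi_{n}\Map_{\Algk}(R,R\oplus M)_{\phi}\simeq \HH^{-n+1}(R,R)\oplus \Derive^{-n}_{k}(R,M)\quad\text{for all }n>1,$$
which is the claim.

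The step I expect to be the main obstacle is the third one: making the grading translation between \cite{dugger2007topological} (homological convention, differentials of degree $-1$) and the present conventions fully precise, and checking that the relevant connecting homomorphisms in their derivations/Hochschild long exact sequence really vanish on the nose in the stated range, so that one obtains an isomorphism rather than just a short exact sequence. The additivity and strictness points in the first two steps are routine, but one should be careful that the splitting $\HH^{-n+1}(R,R\oplus M)\cong \HH^{-n+1}(R,R)\oplus\HH^{-n+1}(R,M)$ is the one induced by the canonical $R$-bimodule maps, so that it is transported correctly through Corollary \ref{cor0}; naturality of $\HH^{\ast}(R,-)$ takes care of this.
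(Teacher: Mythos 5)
Your proposal is correct and follows essentially the same route as the paper, whose proof is exactly the combination of Corollary \ref{cor0}, the identification \eqref{derr}, and the additivity of Hochschild cohomology in the coefficient bimodule; you merely spell out the details (e.g., that connectivity forces strictness, and that the square-zero extension splits as $R\oplus M$ in $\Chrr$). No gaps to report.
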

\begin{proof}
It is a consequence of \ref{cor0}, \ref{derr}, and the fact that the Hochschild cohomology is additive.  
\end{proof}


\subsection{Commutative DG algebras} Let
$k=\mathbb{Q}$ or any field of characteristic 0. The model category of commutative differential unbounded $k$-algebras is denoted by  $\CAlgk$ equipped with the induced model structure, i.e., weak equivalences are isomorphisms in homology (Cf. \cite[section 2.3.1]{toen2008homotopical}) and fibrations are degree-wise surjective morphisms. There is a Quillen adjunction 
$$ \xymatrix{ \Algk \ar@<2pt>[r]^{ Ab} & \CAlgk  \ar@<2pt>[l]^{U}},$$
where $Ab$ is called the abelianization functor. If $R\in\Algk$ is cofibrant and $S\in \CAlgk$, then by \cite[Theorem 2.12]{DH2010}, there is a weak homotopy equivalence of simplicial sets 
\begin{equation}\label{comm}
\Map_{\CAlgk}(Ab(R), S)\sim \Map_{\Algk}(R,S).
\end{equation}
Let $S\in \CAlgk$, and let $\phi: R\rightarrow S$ be a morphism of DG algebras, then it induces a morphism $\overline{\phi}: Ab(R)\rightarrow S$ in $\CAlgk$. 
\begin{corollary} Let $R$ be a cofibrant DG algebra, and let $S$ be a connective commutative  DG algebra with  $\phi: R\rightarrow S$ is a map of DG k-algebras. Then, there is an isomorphism of abelian groups
$$\pi_{i}\Map_{\CAlgk}(Ab(R), S)_{\overline{\phi}}\simeq \HH^{1-i}(R,S), ~ i>1. $$
\end{corollary}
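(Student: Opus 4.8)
The plan is to chain together the isomorphisms already established in the excerpt. The statement to prove is that for a cofibrant DG algebra $R$, a connective commutative DG algebra $S$, and a map $\phi: R\rightarrow S$ of DG $k$-algebras, there is an isomorphism of abelian groups $\pi_{i}\Map_{\CAlgk}(Ab(R),S)_{\overline{\phi}}\simeq \HH^{1-i}(R,S)$ for $i>1$. The first step is to invoke the weak homotopy equivalence of simplicial sets \eqref{comm}, namely $\Map_{\CAlgk}(Ab(R),S)\sim\Map_{\Algk}(R,S)$, which comes from \cite[Theorem 2.12]{DH2010} applied to the Quillen adjunction $(Ab,U)$. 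One must check that this equivalence is compatible with base points, i.e. that it identifies the component of $\overline{\phi}$ with the component of $\phi$; this is immediate since $\overline{\phi}$ is by construction the image of $\phi$ under adjunction. Hence $\pi_{i}\Map_{\CAlgk}(Ab(R),S)_{\overline{\phi}}\simeq \pi_{i}\Map_{\Algk}(R,S)_{\phi}=[R,S]^{\otimes}_{i}$.

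The second step is to apply Corollary \ref{cor0}. Since $S$ is connective, that corollary gives $\HH^{-j}(R,S)\simeq [R,S]^{\otimes}_{j+1}$ for all $j>0$. Setting $j=i-1$, which is $>0$ precisely when $i>1$, yields $[R,S]^{\otimes}_{i}\simeq\HH^{-(i-1)}(R,S)=\HH^{1-i}(R,S)$. Combining with the first step gives the desired isomorphism. Note that in order to apply Corollary \ref{cor0} one needs $S$ to be a strict DG $k$-algebra in the sense of \ref{lemma1bis}; a connective DG algebra has $S_{-1}=0$, so the differential $d_{-1}:S_{-1}\rightarrow S_{0}$ is trivially zero, hence $S$ is strict. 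This is also consistent with the blanket assumption stated at the beginning of the Applications section.

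The only genuine subtlety — and the one place I would spend care rather than treating it as routine — is the base-point bookkeeping, i.e. making sure that each weak equivalence of spaces in the chain carries the relevant base point to the relevant base point, so that the induced maps on $\pi_{i}$ are honest group isomorphisms and not merely bijections between sets of components. For \eqref{comm} this follows from naturality of the derived adjunction unit/counit together with the identification of $\overline{\phi}$ as the adjoint of $\phi$; for the identification underlying Corollary \ref{cor0} it is inherited from the base-point conventions in Theorem \ref{th1} and Theorem \ref{relation}, where $S_{\phi}$ is the $R$-bimodule obtained from $\phi$. I do not expect any real obstacle beyond this; the result is essentially a corollary assembled from \eqref{comm} and \ref{cor0}. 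The proof can therefore be written in two sentences: first rewrite the commutative mapping space via \eqref{comm}, then apply \ref{cor0}.
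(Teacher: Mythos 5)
Your proof is correct and follows the same route as the paper: rewrite $\Map_{\CAlgk}(Ab(R),S)_{\overline{\phi}}$ via the equivalence (\ref{comm}) and then apply Corollary \ref{cor0} with the index shift $j=i-1$. The extra care you take with base points and with noting that connectivity forces $S$ to be strict is sound but does not change the argument, which is the paper's own.
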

\begin{proof}
It is a formal consequence of \ref{cor0} and the fact that $\Map_{\CAlgk}(Ab(R), S)_{\overline{\phi}}\sim \Map_{\Algk}(R,S)_{\phi}$ by \ref{comm}. 
\end{proof}
\begin{remark}
By the Eckmann-Hilton argument, the category $\CAlgk$ is the category of monoids in the monoidal category $(\Algk, \otimes)$ (Cf.\cite[Section 4]{porst2008categories}). It is tempting to apply Dwyer-Hess fundamental theorem for  $\CAlgk$ in order to compute $\Map_{\CAlgk}(R,S)$ for any commutative DG algebras $R$ and $S$. The problem is the $\textbf{Axiom III}$ (Cf. \ref{axiom3}), which is not verified in general for $\CAlgk$, otherwise it would mean that for any commutative DG algebra $R$, the natural map $Ab(R^{c})\rightarrow R$ is a weak equivalence in $\CAlgk$ (where, $R^{c}$ is a cofibrant replacement of $R$ in $\Algk$). An easy example, due to Lurie, is the free commutative algebra in two variables $R=\mathbb{Q}[x,y]$.  The cofibrant replacement of $R$ is the free associative DG algebra $R^{c}$ in three variables $x, y,~z$ such that $\mathrm{deg}(x)=\mathrm{deg}(y)=0$ and $dz=xy-yx$. if $S$ is any commutative DG algebra, by simple computation, we obtain $\pi_{0}\Map_{\CAlgk}(R,S)\simeq \coh^{0}(S)\oplus \coh^{0}(S)$, but  $\pi_{0}\Map_{\Algk}(R^{c},S)\simeq \coh^{0}(S)\oplus\coh^{0}(S)\oplus\coh^{-1}(S)$. We conclude that $Ab(R^{c})\rightarrow R$ is not an equivalence in general. 
\end{remark}

\subsection{Conclusion}
It is natural to ask the following questions:\\
\underline{\textit{Question 1:}} Are Theorems \ref{th2} and \ref{th1} still true if we replace $k$ by any commutative DG algebra $A$?\\
\underline{\textit{Question 2:}} What is the correct formulation of Theorems \ref{th2} and \ref{th1} in the setting of the stable monoidal model category of symmetric spectra $\mathsf{Sp}$ and the associated  category of ring spactra $\mathsf{Sp}^{\otimes}$?  \\

\textbf{Acknowledgement:} I'm grateful to Kathryn Hess for helpful discussions and pointing out some inconsistencies and imprecisions in the earlier version. I would like to thank Oriol Ravent\' os for explaining the sign conventions in the derived categories.

\bibliographystyle{plain} 
\bibliography{dganews}

\end{document}